\def\@secnumfont{\mdseries}
\def\section{\@startsection{section}{1}%
 \z@{.7\linespacing\@plus\linespacing}{.5\linespacing}%
 {\normalfont\scshape\centering}}
\def\subsection{\@startsection{subsection}{2}%
 \z@{.5\linespacing\@plus.7\linespacing}{-.5em}%
 {\normalfont\bfseries}}
\patchcmd{\@thm}{\let\thm@indent\indent}{\let\thm@indent\noindent}{}{}
\patchcmd{\@thm}{\thm@headfont{\scshape}}{\thm@headfont{\bfseries}}{}{}
\DeclareFontFamily{U}{matha}{\hyphenchar\font45}
\DeclareFontShape{U}{matha}{m}{n}{
      <5> <6> <7> <8> <9> <10> gen * matha
      <10.95> matha10 <12> <14.4> <17.28> <20.74> <24.88> matha12
      }{}
\DeclareSymbolFont{matha}{U}{matha}{m}{n}
\DeclareMathSymbol{\acap}{2}{matha}{"58}
\DeclareMathSymbol{\acup}{2}{matha}{"59}
\theoremstyle{plain}
\newtheorem{theorem}{Theorem}[section]
\newtheorem{lemma}[theorem]{Lemma}
\newtheorem{proposition}[theorem]{Proposition}
\newtheorem*{proposition*}{Proposition}
\newtheorem*{question*}{Question}
\theoremstyle{remark}
\newtheorem*{remark}{Remark}
\newtheorem*{definition}{Definition}
\newtheorem*{claim}{Claim}
\def\@tocline#1#2#3#4#5#6#7{\relax
 \ifnum #1>\c@tocdepth 
 \else
 \par \addpenalty\@secpenalty\addvspace{#2}%
 \begingroup \hyphenpenalty\@M
 \@ifempty{#4}{%
 \@tempdima\csname r@tocindent\number#1\endcsname\relax
 }{%
 \@tempdima#4\relax
 }%
 \parindent\z@ \leftskip#3\relax \advance\leftskip\@tempdima\relax
 \rightskip\@pnumwidth plus4em \parfillskip-\@pnumwidth
 #5\leavevmode\hskip-\@tempdima
 \ifcase #1
 \or\or \hskip 1em \or \hskip 2em \else \hskip 3em \fi%
 #6\nobreak\relax
 \dotfill\hbox to\@pnumwidth{\@tocpagenum{#7}}\par
 \nobreak
 \endgroup
 \fi}
\def\Q{\mathbb{Q}} 
\def\Z{\mathbb{Z}} 
\def\R{\mathbb{R}} 
\def\C{\mathbb{C}}
\def\N{\mathbb{N}}
\newcommand{\smfrac}[2]{\mbox{\footnotesize$\displaystyle\frac{#1}{#2}$}}
\DeclareMathAlphabet{\mathbf}{OML}{cmm}{b}{it}
\def\bnm{\begin{enumerate}} 
\def\enm{\end{enumerate}}
\def\ba{\begin{array}} 
\def\ea{\end{array}} 
\def\bpp{\begin{pmatrix}}
\def\epp{\end{pmatrix}}
\def\ker{\operatorname{ker}}
\def\op{\operatorname}
\def\wti{\widetilde}
\def\id{\operatorname{id}}
\def\PD{\op{PD}}
\numberwithin{equation}{section}
\def\op{\operatorname}
\begin{document}

\title{Linking forms revisited}

\author{Anthony Conway}
\address{Universit\'e de Gen\`eve, Section de math\'ematiques, 2-4 rue du Li\`evre, Case postale 64 1211 Gen\`eve 4, Suisse}
\email{anthony.conway@unige.ch}

\author{Stefan Friedl}
\address{Fakult\"at f\"ur Mathematik\\ Universit\"at Regensburg\\   Germany}
\email{sfriedl@gmail.com}

\author{Gerrit Herrmann}
\address{Fakult\"at f\"ur Mathematik\\ Universit\"at Regensburg\\   Germany}
\email{gerrit.herrmann@mathematik.uni-regensburg.de}

\maketitle
\begin{abstract}
We show that the $\Q/\Z$-valued linking forms on rational homology spheres are (anti-) symmetric
and we compute the linking form of a 3-dimensional rational homology sphere in terms of a Heegaard splitting. Both results have been known to a larger or lesser degree, but it is difficult to find rigorous down-to-earth proofs in the literature.  
\end{abstract}

\section{Introduction} 
Let $M$ be an oriented $(2n+1)$-dimensional rational homology sphere, i.e.\ $M$ is an oriented topological manifold with $H_*(M;\Q)\cong H_*(S^{2n+1};\Q)$. 
 In Section~\ref{section:linking-form} we recall the definition of the \emph{linking form} 
\[ \lambda_M\colon H_n(M;\Z)\times H_n(M;\Z)\,\,\to\,\ \Q/\Z.\]
It follows easily from the definition that it is bilinear and non-singular.
This form, whose definition goes back to Seifert~\cite{Seifert, SeifertThrelfall}, has since then appeared frequently both in the study of high-dimensional manifolds~\cite{KervaireMilnor, WallKilling, WallClassification6} and in low dimensional topology~\cite{Livingston,Boyer, CassonGordon}.

The following proposition states a key property of linking forms.

\begin{proposition}\textbf{\emph{(Seifert 1935)}}\label{prop:linking-form-symmetric}
Let $M$ be a $(2n+1)$-di\-men\-si\-onal rational homology sphere.
If $n$ is odd, then the linking form $\lambda_M$ on $H_n(M;\Z)$ is symmetric,
otherwise it is anti-symmetric.
\end{proposition}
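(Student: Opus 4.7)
The plan is to argue directly from the chain-level description of $\lambda_M$ recalled in Section~\ref{section:linking-form}. Given torsion classes $a,b\in H_n(M;\Z)$ annihilated by integers $k,\ell$, one picks singular $(n+1)$-chains $A,B$ in $M$ with $\partial A=ka$ and $\partial B=\ell b$, and the linking form is
\[
\lambda_M(a,b)\;\equiv\;\tfrac{1}{k}\,(A\pitchfork b)\pmod{\Z},
\]
where $A\pitchfork b$ denotes the signed count of transverse intersections of $A$ with a cycle representing $b$. Thus the problem of (anti-)symmetry reduces to a comparison of intersection numbers of chains in $M$.

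After perturbing so that $A$, $B$ and chosen cycles for $a,b$ are mutually transverse, I would view the transverse intersection $A\pitchfork B$ as a $1$-chain in $M$ and invoke the graded Leibniz rule
\[
\partial(A\pitchfork B)\;=\;(\partial A)\pitchfork B\;+\;(-1)^{n}\,A\pitchfork(\partial B),
\]
whose sign is determined by the codimension $(2n+1)-(n+1)=n$ of $A$ in $M$. Since $A\pitchfork B$ is a $1$-chain in the closed manifold $M$, the signed count of points of its boundary vanishes, giving
\[
k\,(a\pitchfork B)\;+\;(-1)^n\,\ell\,(A\pitchfork b)\;=\;0.
\]
Transposing the two factors in $a\pitchfork B$ contributes the sign $(-1)^{n(n+1)}=+1$, so $a\pitchfork B=B\pitchfork a$; dividing the previous identity by $k\ell$ and reducing modulo $\Z$ yields
\[
\lambda_M(a,b)\;=\;(-1)^{n+1}\,\lambda_M(b,a),
\]
which is exactly symmetry for $n$ odd and anti-symmetry for $n$ even.

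The main obstacle is the bookkeeping of signs at the chain level: one must verify that the Leibniz rule and the transposition rule for intersection chains hold with the signs above, and that these match the signs implicit in the definition of $\lambda_M$ used in the paper. As a clean sanity check I would translate the statement into cohomology: via Poincar\'e duality and the Bockstein, $\lambda_M$ is realised as a cup-product pairing on two copies of $H^{n+1}(M;-)$, and the graded-commutativity $\alpha\cup\beta=(-1)^{(n+1)(n+1)}\beta\cup\alpha$ produces precisely the sign $(-1)^{n+1}$, recovering the same dichotomy without any chain-level sign computation.
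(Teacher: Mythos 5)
Your argument is, in essence, the classical intersection\-/theoretic proof, and it has two genuine gaps. First, the definition you start from is not the one the paper uses: Section~\ref{section:linking-form} defines $\lambda_M$ via Poincar\'e duality, the inverse Bockstein and the Kronecker pairing, not via counting intersections of chains, so the equivalence of your chain\-/level description with $\lambda_M$ would itself have to be proved. More seriously, the operations you rely on --- perturbing singular chains to mutual transversality, forming the intersection $1$\-/chain $A\pitchfork B$, and the Leibniz rule for $\partial(A\pitchfork B)$ --- are not defined for singular chains, and no transversality argument is available at all in the topological category in which the paper works (manifolds are not assumed smooth). This is precisely the ``somewhat informal'' argument of Seifert that the paper sets out to replace; making $A\pitchfork B$ rigorous is the whole difficulty, not merely the sign bookkeeping you flag as the main obstacle.

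Your cohomological ``sanity check'' also glosses over exactly the point where the work lies. In the identity of Lemma~\ref{lem:basics-linking-form}\,(2) the two factors are $(\beta^{-1}\circ\op{PD}_M^\Z)(a)\in H^n(M;\Q/\Z)$ and $\op{PD}_M^\Z(b)\in H^{n+1}(M;\Z)$: they sit in different degrees and have different coefficients, so graded commutativity of the cup product, as in~(\ref{eq:cup-symmetry}), contributes only $(-1)^{n(n+1)}=+1$. The sign $(-1)^{n+1}$ does not come from transposing the two classes but from moving the Bockstein from one factor to the other, i.e.\ from the identity $\nu_*(\beta(x)\acup_{\otimes} y)=(-1)^{k+1}\,\nu_*(x\acup_{\otimes}\beta(y))$ of Lemma~\ref{lem:bockstein-cup}, whose proof is a chain\-/level computation with the connecting homomorphism and the coboundary formula~(\ref{eq:CuoOtimesCoboundary}); that lemma is the actual content of the paper's proof. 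Your heuristic sign $(-1)^{(n+1)(n+1)}$ happens to equal $(-1)^{n+1}$, but the derivation is not valid as stated: the pairing is not a cup square on $H^{n+1}(M;\Z)$ (that would land in $H^{2n+2}(M)=0$), and treating $\beta^{-1}$ as a harmless degree shift is precisely the informal step that needs justification.
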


This proposition was first formulated in the 3-dimensional context by  Seifert~\cite[p.~814]{Seifert}. Since Seifert did not yet have the tools of singular homology and cohomology theory at his disposal, he could only give a somewhat informal proof.
 Another somewhat informal proof is implicitly given in \cite[p.~59-60]{GL}, where the linking form is calculated in terms of the intersection form on a bounding 4-manifold.
But to the best of our knowledge there are not many rigorous proofs for the proposition in the literature.

Linking forms have been generalized by Blanchfield and many others to more general coefficients, where the corresponding linking forms are also well-known to be hermitian. But there are  again very few rigorous proofs for these statements, in fact the only careful proof we are aware of is given in the recent paper by Powell~\cite{Powell}.

We  give a rigorous quick proof of Proposition~\ref{prop:linking-form-symmetric}. We only use cup and cap products and we expect that the same approach can be used to reprove the hermitianness statement of Powell~\cite{Powell}. To keep the paper short and readable we will not attempt to carry out this generalization. 

In the following, given coprime natural numbers $p$ and $q$ we denote by $L(p,q)$ the lens space $S^3/\sim$ where $\sim$ is the equivalence relation on $S^3$ that is generated by
\[ (z,w)\,\,\sim\,\, \big(ze^{2\pi i/p},we^{2\pi iq/p}\big).\]
We give $S^3$ the standard orientation and we endow $L(p,q)$ with the unique orientation that turns the projection map $S^3\to L(p,q)$ into an orientation-preserving map.
The following proposition recalls the arguably most frequently used calculation of linking forms on 3-manifolds.

\begin{proposition}\label{prop:linking-form-lens-space}
The linking form of the 3-dimensional lens space $L(p,q)$ is isometric to the form
\[ \ba{rcl} \Z_p\times \Z_p&\to & \Q/\Z\\
(a,b)&\mapsto & -\smfrac{q}{p}\cdot a\cdot b.\ea\]
\end{proposition}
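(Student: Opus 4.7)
The plan is to compute $\lambda_{L(p,q)}$ directly from the definition recalled in Section~\ref{section:linking-form}, by using the $p$-fold orientation-preserving cover $\pi\colon S^3 \to L(p,q)$ to produce an explicit rational Seifert surface for a generator of $H_1(L(p,q);\Z)$.

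First, I would pick out a generator $\bar\alpha$ of $H_1(L(p,q);\Z)\cong\Z_p$. A natural choice is the loop whose lift to $S^3$ is the arc $\{(e^{i\theta},0) : \theta\in[0,2\pi/p]\}$ on the first Hopf circle $C_1=\{(z,0)\in S^3 : |z|=1\}$; the circle $C_1$ is $\Z_p$-invariant and projects $p$-to-one onto $\bar\alpha$, so $\pi_*[C_1]=p[\bar\alpha]$. A rational Seifert surface for $\bar\alpha$ is then supplied by the half-disk
\[
D_1=\{(\cos(s)e^{i\theta},\sin(s)) : s\in[0,\pi/2],\ \theta\in[0,2\pi]\}\subset S^3,
\]
which satisfies $\partial D_1=C_1$, so that the pushforward $\Sigma:=\pi_*D_1$ is a $2$-chain in $L(p,q)$ with $\partial \Sigma=p\bar\alpha$.

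Next, I would write down a parallel push-off $\bar\beta$ of $\bar\alpha$, disjoint from $\bar\alpha$ but homologous to it. The correct lift is the $(1,q)$-torus curve $\tilde\beta(\theta)=(\sqrt{1-\epsilon^2}\,e^{i\theta},\epsilon e^{iq\theta})$, $\theta\in[0,2\pi]$, sitting on the torus $\{|z|=\sqrt{1-\epsilon^2}\}\subset S^3$. The exponent $q$ is forced by requiring that the arc $\theta\in[0,2\pi/p]$ descend to a closed loop in $L(p,q)$ whose class in $\pi_1(L(p,q))$ agrees with that of $\bar\alpha$. The set $\tilde\beta$ is $\Z_p$-invariant, disjoint from $C_1$, and projects $p$-to-one onto $\bar\beta$, so $\pi^{-1}(\bar\beta)=\tilde\beta$.

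Since $\pi$ is orientation-preserving and $\pi|_{D_1}$ is generically injective, each transverse intersection point of $\Sigma$ with $\bar\beta$ in $L(p,q)$ lifts uniquely to a point of $D_1\cap\tilde\beta$ in $S^3$, whence $\Sigma\cdot\bar\beta=D_1\cdot\tilde\beta$. The latter count is elementary: $D_1$ is cut out by $w\in\R_{\ge 0}$, so the condition $\epsilon e^{iq\theta}\in\R_{\ge 0}$ forces $q\theta\equiv 0\pmod{2\pi}$, giving $q$ transverse intersection points, all of the same sign. Plugging into $\lambda_{L(p,q)}(\bar\alpha,\bar\beta)=\frac{1}{p}(\Sigma\cdot\bar\beta)$ and using $\bar\beta\sim\bar\alpha$, I conclude that $\lambda_{L(p,q)}(\bar\alpha,\bar\alpha)=\pm q/p\in\Q/\Z$. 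The principal obstacle will be the sign bookkeeping: the orientations on $L(p,q)$ (inherited from $S^3$), on $D_1$ (chosen so that $\partial D_1=C_1$ carries the standard orientation), and on $\bar\beta$, together with the sign convention for $\lambda$ from Section~\ref{section:linking-form}, must all be tracked consistently to produce the claimed minus sign in $-q/p$. Apart from that, the intersection count is routine.
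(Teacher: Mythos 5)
Your geometric picture is the classical one (it is essentially the argument in Hatcher, Munkres and Bredon that the introduction cites), but as a proof of this proposition it has two genuine gaps. First, you compute with the formula $\lambda_{L(p,q)}(\bar\alpha,\bar\beta)=\frac{1}{p}(\Sigma\cdot\bar\beta)$, where $\Sigma$ is a $2$-chain with $\partial\Sigma=p\bar\alpha$. That is not the definition of the linking form used here: the form is defined in Section~\ref{section:linking-form} via Poincar\'e duality, the inverse Bockstein and the Kronecker evaluation, and nowhere in the paper (nor in your argument) is it shown that this cohomological pairing agrees, \emph{including the sign}, with the intersection count $\frac{1}{p}(\Sigma\cdot\bar\beta)$. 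Making that identification rigorous is precisely the kind of statement this paper is written to supply (compare Lemma~\ref{lem:ThetaWorks} and Proposition~\ref{prop:ComputeTheta}, which play this role for the Heegaard-splitting picture), so assuming it outright leaves the main issue unaddressed. One would also have to say what the intersection number of the singular chain $\Sigma=\pi_*D_1$ with a $1$-cycle means and why the result depends only on the homology classes; that again comes down to cup and cap products.

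Second, and decisively for this particular statement, you end with $\lambda(\bar\alpha,\bar\alpha)=\pm q/p$ and defer ``the sign bookkeeping.'' The sign is the only part of the proposition that is not already in the textbook references, which the introduction points out determine the form only up to sign; so the step you postpone is exactly the step being asked for, and it is not routine, since it requires tracking the orientation conventions for $\partial$, for the Bockstein and for the cap product simultaneously. The paper instead deduces Proposition~\ref{prop:linking-form-lens-space} from Theorem~\ref{thm:Main}: $L(p,q)$ is the genus-one Heegaard splitting with gluing matrix $\left(\begin{smallmatrix} q& p\\ s& r\end{smallmatrix}\right)$, so that $A_\varphi=(q)$ and $B_\varphi=(p)$, and the general formula forces the value $-\frac{q}{p}\,v\,w$ with no residual sign ambiguity. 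If you want to keep the covering-space argument, you would first need to prove the identity $\lambda_M(a,b)=\frac{1}{p}(\Sigma\cdot b)$ directly from Definition~\ref{def:Linking} with an explicit sign, which is comparable in length to the computation carried out in Section~\ref{sec:Proof}.
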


This proposition is essential in the classification of lens spaces up to homotopy equivalence, in fact Whitehead~\cite{Whitehead41} showed that two lens spaces are homotopy equivalent if and only if their linking forms are isometric.

In the literature, except for the precise sign in the formula,  many proofs of 
Proposition~\ref{prop:linking-form-lens-space} or of equivalent statements can be found. In fact many textbooks in algebraic topology contain a proof, see e.g.\  \cite[p.~306]{Hatcher}, \cite[Chapter~69]{Munkres} and \cite[p.~364]{Bredon}, except that as far as we understand it, none of these proofs address the precise sign in the calculation. All these proofs work very explicitly with lens spaces and it is not evident how they generalize to other 3-manifolds.

We will now explain how to calculate the linking form of any rational homology sphere in terms of a Heegaard splitting. We will then see that this calculation gives in particular a proof of Proposition~\ref{prop:linking-form-lens-space}. 

Throughout this paper, given $g\in \N$ we adopt the following notation:
\bnm
\item We denote by $X_g$ a handlebody of genus $g$ and we equip it with an orientation.
We denote by $Z_g$ a copy of $X_g$.
\item We write $F_g=\partial X_g=\partial Z_g$. We equip $F_g$ with the orientation coming from the boundary orientation of $X_g$.
\item We denote by $a_1,\ldots,a_g,b_1,\ldots,b_g\in H_1(F_g;\Z)$ a symplectic basis for   $H_1(F_g;\Z)$ such that  $a_1,\ldots,a_g$ form a basis for $H_1(X_g;\Z)$. Recall that ``symplectic basis" means that the intersection form of $F_g$ with respect to this basis is given by the matrix
\[ \hspace{1cm} \bpp 0&I_g \\ -I_g&0\epp\]
where we denote by  $I_g$ the $g\times g$-identity matrix. (For purists a calculation of the intersection form of a surface using cup products and cap products can be found in \cite[Chapters~47 and~55]{Friedl}).
\item Given an orientation-reversing self-diffeomorphism $\varphi$ of the genus $g$ surface $F_g$ we write $M(\varphi):=X_g \cup_{\varphi} Z_g$ where we identify $x\in F_g=\partial X_g$ with $\varphi(x)\in \partial Z_g$. We give $M(\varphi)$ the orientation which turns both the inclusions $X_g\to M(\varphi)$ and $Z_g\to M(\varphi)$ into orientation-preserving embeddings.
Furthermore  we denote by
\[\hspace{1cm}
\bpp A_\varphi&B_\varphi\\C_\varphi&D_\varphi \epp\]
  the matrix  that represents $\varphi_*\colon H_1(F_g;\Z)\to H_1(F_g;\Z)$ with respect to the ordered basis $a_1,\dots,a_g,b_1,\dots,b_g$.
\enm

One of the first theorems in 3-manifold topology states that every  closed 3-manifold  can be written as $M(\varphi)$ for some $g$ and some orientation-reversing diffeomorphism $\varphi\colon F_g\to F_g$. (Here and throughout the paper all manifolds are understood to be compact, oriented and path-connected.)
The following theorem thus gives a calculation of the linking form for any 3-dimensional rational homology sphere.

\begin{theorem}
\label{thm:Main}
Let $g\in \N$ and let $\varphi\colon F_g\to F_g$ be an orientation-reversing diffeomorphism.  If~$M(\varphi)$ is a rational homology sphere, then $B_\varphi\in M(g\times g,\Z)$ is invertible and the linking form of $M(\varphi)$ is isometric to the  form
\begin{align*}
\Z^g / B_\varphi^T \Z^g \times \Z^g / B_\varphi^T \Z^g &\to\,\, \Q / \Z \\
(v,w) &\mapsto\,\, -v^T B_\varphi^{-1}A_\varphi w.
\end{align*}
\end{theorem}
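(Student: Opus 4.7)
The plan is to carry out a Mayer-Vietoris computation together with a direct intersection-theoretic calculation. First I would compute $H_1(M(\varphi);\Z)$ using the Mayer-Vietoris sequence for $M(\varphi) = X_g \cup_\varphi Z_g$. Because $H_2(X_g) = H_2(Z_g) = 0$, this reduces to presenting $H_1(M(\varphi))$ as the cokernel of a map $\Z^{2g} \to \Z^{2g}$ whose matrix I read off from the block form of $\varphi_*$: identifying $F_g$ with $\partial Z_g$ canonically and using $\varphi^{-1}$ to pass to $\partial X_g$, a short calculation shows that the cokernel is $\Z^g/B_\varphi^T\Z^g$, with the $i$-th standard basis vector corresponding to the image of $a_i \in H_1(X_g)$ in $H_1(M(\varphi))$. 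Invertibility of $B_\varphi$ is then immediate: the rational homology sphere hypothesis forces the cokernel to be finite, so $\det B_\varphi \neq 0$.

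Next I would produce an explicit rational bounding chain for $\gamma_v := \sum_i v_i a_i$ in $M(\varphi)$. Over $\Q$, the equation $B_\varphi^T u = v$ has a solution $u \in \Q^g$. Let $D_i^Z \subset Z_g$ be compressing disks bounded by $b_i$ on $\partial Z_g$; then $c := \sum_i u_i D_i^Z$ has boundary $\sum_i u_i b_i$ on the shared surface. Tracing this $1$-cycle through the gluing into $X_g$ and using $B_\varphi^T u = v$ shows it is homologous in $X_g$ to $\gamma_v$. A correcting rational $2$-chain $e \subset X_g$, built from the compressing disks $D_i^X$ with coefficients read off from the $b$-components of $\partial c$ expressed in the $\partial X_g$ basis via $(\varphi^{-1})_*$, yields a $2$-chain $c - e$ in $M(\varphi)$ with $\partial(c - e) = \gamma_v$ as chains.

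Finally, to compute $\lambda([v],[w])$ I push $\gamma_w := \sum_j w_j a_j$ into the interior of $X_g$ so that it is disjoint from $c \subset Z_g$; hence $\lambda([v],[w]) \equiv -e \cdot \gamma_w \pmod{\Z}$. Since $D_i^X \cdot a_k = \delta_{ik}$ in $X_g$, and the coefficients of $e$ in the $D_i^X$ are (up to sign) the entries of $A_\varphi^T u$, the computation collapses to $-v^T B_\varphi^{-1} A_\varphi w$. The key input here is the explicit form $\bpp A_\varphi & B_\varphi \\ C_\varphi & D_\varphi \epp^{-1} = \bpp -D_\varphi^T & B_\varphi^T \\ C_\varphi^T & -A_\varphi^T \epp$ coming from the symplectic relation that $\varphi_*$ satisfies because $\varphi$ is orientation-reversing. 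The companion identity $A_\varphi B_\varphi^T = B_\varphi A_\varphi^T$, derived from the same symplectic relation, shows $B_\varphi^{-1} A_\varphi$ is symmetric, which both makes the form well defined on $\Z^g / B_\varphi^T \Z^g$ and matches Proposition~\ref{prop:linking-form-symmetric}.

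The main obstacle will be keeping track of orientation conventions: $F_g$ is identified with both $\partial X_g$ and $\partial Z_g$ with a twist by $\varphi$, so the sign in the final formula depends on correctly combining the induced boundary orientations, the intersection sign $D_i^X \cdot a_k$, and the direction of the Mayer-Vietoris identification chosen in the first step.
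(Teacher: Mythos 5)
Your first step coincides with the paper's: the Mayer--Vietoris presentation of $H_1(M(\varphi);\Z)$ as $\Z^g/B_\varphi^T\Z^g$, with $e_i\mapsto i_*(a_i)$, and the resulting invertibility of $B_\varphi$ are exactly Proposition~\ref{prop:Presentation}, and your rational bounding chain built from compressing disks is close in spirit to the explicit chains $x\in C_2(X;\Q)$ and $z\in C_2(Z;\Q)$ constructed in Proposition~\ref{prop:ComputeTheta}. The divergence is in how the linking form is then evaluated, and this is where there is a genuine gap. The paper defines $\lambda_M$ via $\op{PD}_M^\Z$, the inverse Bockstein and the Kronecker pairing, and it never invokes an intersection number of a $2$-chain with a $1$-cycle; instead it builds a map $\theta\colon H_1(M;\Z)\to H_1(F;\Q/\Z)$ making diagram~(\ref{eq:Wanted}) commute (Lemma~\ref{lem:ThetaWorks}) and transfers the whole computation to the intersection form of the Heegaard surface, evaluated with cup and cap products. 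Your step ``$\lambda([v],[w])\equiv -e\cdot\gamma_w \pmod\Z$'' is the classical geometric characterization of the linking form, but relative to the definition actually in force it is an unproven identity: establishing it requires essentially the same Bockstein-plus-Poincar\'e-duality bookkeeping that occupies Lemmas~\ref{lem:munkres} and~\ref{lem:ThetaWorks}, and in the topological category one also needs a cup-product (or general-position) definition of the chain-level intersection number before the formula even makes sense. So the intersection count is not a shortcut past the cohomological work; it is a restatement of what still has to be proved.

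The second gap is the sign. You correctly identify orientation bookkeeping as ``the main obstacle'' but then do not carry it out: the coefficients of $e$ are pinned down only ``up to sign'', the convention $D_i^X\cdot a_k=\delta_{ik}$ versus $-\delta_{ik}$ is never fixed against the boundary-orientation convention for $F_g=\partial X_g=\partial Z_g$ (recall $[F_g]=-[\partial Z_g]$ inside $M(\varphi)$), and the direction of the Mayer--Vietoris identification is left implicit. Since the overall sign is the stated point of the theorem --- the magnitude $\pm v^TB_\varphi^{-1}A_\varphi w$ is classical --- deferring it means the proposal does not prove the statement as formulated. One detail you handle well: for an orientation-reversing $\varphi$ the matrix $R=\left(\begin{smallmatrix}A&B\\C&D\end{smallmatrix}\right)$ satisfies $R^TJR=-J$, and your inverse $\left(\begin{smallmatrix}-D^T&B^T\\C^T&-A^T\end{smallmatrix}\right)$ is the correct one in that anti-symplectic case (it differs from the paper's Lemma~\ref{lem:MatrixProperties}, which is stated for symplectic $\varphi_*$); either version still gives $A B_\varphi^T=B_\varphi A_\varphi^T$, so well-definedness and symmetry of the form are unaffected.
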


\begin{remark}
\mbox{}
\bnm
\item  In Theorem~\ref{thm:Main2} we will state precisely what isomorphism $\Z /B^T_\varphi \Z^g\to H_1(M(\varphi);\Z)$ we use.
\item As we mentioned above, the previous calculations of linking forms that we are aware of do not address the sign question of the formula, i.e.\ they only determine the linking form up to a fixed sign. We tried exceedingly hard to determine the sign correctly. Nonetheless, one should take our sign with a grain of salt. After we first thought that we had definitely determined the correct sign, we (and our careful referee) found many more sign errors.
\item
One could make the case that the statement of Theorem~\ref{thm:Main} is at least implicit in \cite{Reidemeister34} as explained by Seifert \cite[p. 827]{Seifert}.
But the calculation provided in that paper is not very rigorous by today's standards and it is also very hard to decypher for a modern reader, even if the reader is able to understand arcane German. To the best of our knowledge we provide the first proof of Theorem~\ref{thm:Main} that is rigorous and that only uses singular homology and cohomology. Also, similar to  our proof of the symmetry of linking forms, we think that our approach to calculating linking forms can be generalized quite easily to compute twisted linking forms of a closed 3-manifold in terms of a Heegaard splitting.
\enm
\end{remark}

We now return to lens spaces. We denote by $X=Z=S^1\times D^2$ the solid torus and we write $F=\partial X=\partial Z$. We equip $S^1$, $S^1\times D^2$ and $F=\partial X=S^1\times S^1$  with the standard orientation. Note that with these conventions  $a=[S^1\times 1]$ and $b=[1\times S^1]$ form a symplectic basis, in the above sense, for the torus~$\partial X$.
Let $p,q\in \N$ be coprime. We pick $r,s\in \N$ such that $qr-ps=-1$.
We write
\[ A\,\,=\,\, \bpp q& \hfill p \\  \hfill  s&  r \epp\]
and we denote by $\varphi\colon F\to F$ the orientation-reversing diffeomorphism such that $\varphi_*$, with respect to the basis given by $a=[S^1\times 1]$ and $b=[1\times S^1]$, is represented by the matrix $A$. (Here $S^1\times 1$ and $1\times S^1$ are viewed as submanifolds with the obvious orientation coming from $S^1$.)
In \cite[Chapter~56]{Friedl} it is proved, in full detail, that there exists an \emph{orientation-preserving} diffeomorphism 
from $L(p,q)$ to $X\cup_\varphi Y$.  Theorem~\ref{thm:Main}  thus
says that the linking form of $L(p,q)$ is isometric to the form
\[ \ba{rcl} \Z/p\times \Z/p&\mapsto & \Q/\Z\\
(v,w)&\mapsto & -v \cdot \smfrac{q}{p}\cdot w.\ea\]

\begin{remark}
One of the ideas of the proof is to reduce the calculation of Poincar\'e duality of a 3-manifold to the well-known calculation of Poincar\'e duality of the Heegaard surface $F$ of $M(\varphi)=X\cup_F Z$. A similar approach has been used in \cite{Friedl-Powell} to reduce the calculation of the Blanchfield form of a knot to the Poincar\'e duality of a Seifert surface.
\end{remark}

\begin{remark}
Given an $(2n+1)$-dimensional manifold $M$ one can also define a linking form on the torsion submodule of $H_n(M;\Z)$. The same argument 
as in the proof of Proposition~\ref{prop:linking-form-symmetric} shows that it is symmetric. In the 3-dimensional context it should not be very hard to generalize 
Theorem~\ref{thm:Main} to the case of 3-manifolds that are not rational homology spheres.
\end{remark}

We could like to conclude this introduction with the following quote which we found in ~\cite[p.~21]{Kirby}: ``Think with intersections, prove with cup products.''
In low-dimensional topology, many papers dealing with intersection pairing shy away from working with cup and cap products, instead one often uses intuitive but arguably not entirely rigorous arguments.
Consequently, one goal of this paper 
is to convince readers that cup and cap products are
amazing objects: once one has gotten used to them, not only do they provide wonderful (and arguably the only) tools for proving certain statements, they can also be used to give efficient calculations. Finally we would like to point out that arguments using cup and cap products easily generalize to twisted coefficients which can no longer dealt with by using ``naive'' arguments.

\subsection*{Conventions.}
By a manifold we mean what is often called a topological manifold, i.e.\ we do not require the existence of a smooth structure. Furthermore all manifolds are understood to be compact, oriented and path-connected.

\subsection*{Organization.}
This paper is organized as follows. In Section~\ref{section:cup-cap} we recall basic facts on the  cup product and the cap product with coefficients. 
In Section~\ref{section:linking-form} we recall the definition of the linking form and 
in Section~\ref{sec:Symmetry} we provide the proof that linking forms are (anti-) symmetric. 
Finally in Section~\ref{sec:Proof} we provide the proof of Theorem~\ref{thm:Main}.

\subsection*{Acknowledgments.} 
The first author was supported by the NCCR SwissMap funded by the Swiss FNS. He also wishes to thank the university of Regensburg for its hospitality.
The second and the third author gratefully acknowledge the
support provided by the SFB 1085 `Higher Invariants' at the University of Regensburg,
funded by the Deutsche Forschungsgemeinschaft DFG.
We are very grateful to the referee for very quickly providing two very thorough reports with many thoughtful and helpful comments. We also wish to thank Mark Powell for providing useful feedback. Finally special thanks to Jae Choon Cha.

\section{Preliminaries}
\label{sec:Prelim}
This section recalls the definition of the linking form as well as some standard facts of algebraic topology. References include~\cite{Bredon, Munkres, Hatcher, WallKilling, KervaireMilnor, SeifertThrelfall}. 
\medbreak

Before we start out the discussion of the properties of the cup product and the cap product we make a few remarks on sign conventions:
\bnm
\item Bredon \cite{Bredon} defines the coboundary map as
$\delta_n=(-1)^{n+1} \partial_{n+1}^*$ whereas most other books, e.g.\ Munkres~\cite{Munkres} and Hatcher~\cite{Hatcher} define the coboundary map as $\delta_n=\partial_{n+1}^*$. We choose to follow the latter convention. 
These sign conventions influence some of the formulas, e.g.\ 
the diagram in Lemma~\ref{lem:munkres} commutes only up to the sign $(-1)^{k+1}$, whereas following the approach of Bredon the diagram  in Lemma~\ref{lem:munkres} would commute.
\item For the definition of the cup and cap product we follow the definitions used in Hatcher~\cite{Hatcher}.
\bnm
\item  Comparing \cite[p.~206]{Hatcher} and \cite[p.~328]{Bredon} one sees that for the cup product of cohomology classes in degrees $k$ and $l$ the definitions differ by the sign $(-1)^{kl}$.
\item  Comparing \cite[p.~239]{Hatcher} and \cite[p.~335]{Bredon} one sees that for the cap product of a cohomology  class of degree $k$ with a homology class in degree $l$ the definitions differ by the sign $(-1)^{k(l-k)}$.
\enm 
We refer to \cite[Chapter~51 and~53]{Friedl} we refer to a detailed discussion of the different sign conventions of cup products and cap products in the books by Bredon \cite{Bredon}, Dold \cite{Dold}, Hatcher \cite{Hatcher} and Spanier \cite{Spanier}. 
\enm
The above sign conventions are also the ones used in \cite{Friedl}. 
\subsection{The cup product and the cap product}\label{section:cup-cap}
Let $X$ be a topological space and let $G,H$ be abelian groups. 
In the following given $i=0,1,\dots$ we denote by $v_i=(0,\dots,1,\dots,0)$ the $i$-th vertex of the standard simplex and given points $w_0,\dots,w_{s}$ in $\R^m$ we denote by $[w_r,\dots,w_{r+s}]\colon \Delta^s\to \R^m$ the unique affine linear map that sends $v_i$ to $w_{i}$ for $i=0,\dots,s$.  The usual definition of the cup product as provided in \cite[p.~206]{Hatcher} generalizes to a cup product
\[ \ba{rcl}\acup_{\otimes} \colon C^k(X;G)\times C^l(X;H)&\to& C^{k+l}(X;G\otimes H)\\
(\varphi,\psi)&\mapsto & \left( \ba{rcl} C_{k+l}(X)&\to & G\otimes H\\
\sigma&\mapsto & \varphi(\sigma\circ [v_0,\dots,v_k])\otimes 
\psi(\sigma\circ [v_k,\dots,v_{k+l}])\ea\right).\ea
\]
A slightly lengthy but uneventful calculation shows, see e.g.\  \cite[Lemma~51.1]{Friedl}, that for $f\in C^k(X;G)$ and $g\in C^l(X;H)$ we have
\begin{equation}
\label{eq:CuoOtimesCoboundary}
\delta(f \acup_{\otimes} g)
\,\,=\,\,\delta( f)\acup_{\otimes} g+(-1)^{k} \cdot f\acup_{\otimes} \delta(g)\,\in\, C^{k+l}(X;G\otimes H).
\end{equation}
This implies that the above cup product on cochains  descends to a cup product
\[ \acup_{\otimes} \colon H^k(X;G)\times H^l(X;H)\,\,\to\,\, H^{k+l}(X;G\otimes H).\]
We denote by $\Theta \colon G\otimes H\to H\otimes G$ the obvious isomorphism. 
Then for $\varphi \in H^k(X;G)$ and $\psi\in H^l(X;H)$ the usual proof of the (anti-) symmetry of the cup product, see e.g.\ \cite[Proposition~51.7]{Friedl}, can be used to show that 
\begin{equation}
\label{eq:cup-symmetry}
\Theta_*(\varphi \acup_{\otimes} \psi)\,\,=\,\,(-1)^{kl} \cdot \psi\acup_{\otimes}\varphi\,\, \,\in\,\,H^{k+l}(X;H\otimes G).
\end{equation}

If $H=\Z$, then using the obvious isomorphism $\nu\colon G\otimes \Z\to G$ we obtain the cup product
\[ \acup \colon H^k(X;G)\times H^l(X;\Z)\,\,\to\,\, H^{k+l}(X;G\otimes \Z)\,\,\xrightarrow{\nu_*}\,\, H^{k+l}(X;G).\]
The same holds if $G=\Z$ and $H$ is some arbitrary abelian group.

Now let $G$ be an abelian group and let $(X,U)$ be a pair of topological spaces.
The definition of the cap product as provided in \cite[p.~239]{Hatcher} generalizes to a cap product
\[ \ba{rcl}\acap \colon C^k(X;G)\times C_l(X,U;\Z)&\to& C_{l-k}(X,U;G)\\
(\varphi,\sigma)&\mapsto & \varphi(\sigma\circ [v_0,\dots,v_k])\otimes 
\sigma\circ [v_k,\dots,v_{k+l}]
\ea
\]
which descends to a cap product 
\[ \acap \colon H^k(X;G)\times H_l(X,U;\Z)\,\,\to\,\, H_{l-k}(X,U;G).\]
If $X$ is path-connected, then we make the identification $H_0(X;G)=G$ via the augmentation map.
In this case we  refer to
\[ \ba{rcl} \langle\,\,,\,\,\rangle \colon H^k(X;G) \times H_k(X;\Z)&\to & H_0(X;G)=G\\
(\varphi,\sigma)&\mapsto & \langle \varphi,\sigma\rangle:=\varphi\acap \sigma\ea\]
as the \emph{Kronecker pairing}.

\begin{lemma}
\label{lem:PropertiesCap}
Let $G$ be an abelian group and let  $ (X,U) $ be a  pair of topological spaces.
\begin{enumerate}[font=\normalfont]
\item  Let  $f \colon (X,U) \to (Z,V)$ be a map of pairs.
If $\xi\in H^k(Z;G)$ and  $\sigma \in H_l(X,U;\Z)$, then 
 \[ \hspace{1cm} f_*(f^*(\xi) \acap \sigma)\,\, =\,\,\xi \acap f_*(\sigma)\,\,\in \,\,H_{l-k}(Z,V;G).\]
$($Hereby note that the map $f_*$ is the map on relative homology whereas $f^*$ denotes the map $f^*\colon H^k(Z;G)\to H^k(X;G)$ on absolute cohomology.$)$
\item  
If   $\varphi\in H^k(X;G)$, $\psi \in H^l(X;\Z)$ and  $\sigma\in  H_m(X,U;\Z)$, then
\[ \hspace{1cm} (\varphi \acup \psi) \acap \sigma\,\,=\,\,(-1)^{kl}\cdot \varphi \acap (\psi \acap \sigma)\,\,\in\,\, H_{m-k-l}(X,U;G).\]
\end{enumerate}
\end{lemma}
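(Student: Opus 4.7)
The plan is to verify both identities at the cochain level and then pass to (co)homology, using that cup and cap products are induced from chain/cochain-level constructions.

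For (1), I would simply unpack the definition of the cap product. For a singular $l$-simplex $\sigma\colon \Delta^l \to X$ and a cocycle $\xi \in C^k(Z;G)$,
\[ f^*(\xi)\acap\sigma \;=\; \xi\bigl(f\circ\sigma\circ[v_0,\dots,v_k]\bigr)\otimes \sigma\circ[v_k,\dots,v_l].\]
Applying the chain map $f_*$ composes the back factor with $f$, yielding
\[ f_*\bigl(f^*(\xi)\acap\sigma\bigr) \;=\; \xi\bigl((f\circ\sigma)\circ[v_0,\dots,v_k]\bigr)\otimes (f\circ\sigma)\circ[v_k,\dots,v_l] \;=\; \xi\acap f_*(\sigma),\]
which is the desired identity at the chain level. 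Since $f^*$ is a cochain map and $f_*$ is a chain map (relative to $U$ and $V$), the equality descends to $H_{l-k}(Z,V;G)$.

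For (2), I would first establish the cochain-level identity
\[ \varphi\acap(\psi\acap\sigma) \;=\; (\psi\acup\varphi)\acap\sigma.\]
Using the decomposition of $[v_0,\dots,v_m]$ into the three consecutive faces $[v_0,\dots,v_l]$, $[v_l,\dots,v_{l+k}]$ and $[v_{l+k},\dots,v_m]$, both sides unpack directly from the definitions to
\[ \psi\bigl(\sigma\circ[v_0,\dots,v_l]\bigr)\cdot \varphi\bigl(\sigma\circ[v_l,\dots,v_{l+k}]\bigr)\cdot \sigma\circ[v_{l+k},\dots,v_m],\]
with no sign. Passing to (co)homology and invoking the graded commutativity of the cup product (equation \eqref{eq:cup-symmetry}) in the form $[\psi\acup\varphi]=(-1)^{kl}[\varphi\acup\psi]$, I would then conclude
\[ (\varphi\acup\psi)\acap\sigma \;=\; (-1)^{kl}(\psi\acup\varphi)\acap\sigma \;=\; (-1)^{kl}\,\varphi\acap(\psi\acap\sigma)\]
in $H_{m-k-l}(X,U;G)$.

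The main obstacle is purely bookkeeping of signs, not any conceptual difficulty. The paper has emphasized that its conventions follow Hatcher rather than Bredon, so one must be careful that the factor $(-1)^{kl}$ on the right-hand side of (2) comes entirely from graded commutativity of the cup product on cohomology, and that no extra sign sneaks in from the cochain-level associativity. In particular, the unsigned cochain identity above is the version that holds for Hatcher's conventions; a different convention (e.g.\ Bredon's) would rearrange the signs between the associativity step and the commutativity step, but the net factor $(-1)^{kl}$ in the final formula would be the same.
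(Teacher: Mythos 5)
Your proposal is correct and follows essentially the same route as the paper: statement (1) by direct unpacking of the definitions, and statement (2) by first establishing the unsigned cochain-level identity $(\psi \acup \varphi)\acap\sigma = \varphi\acap(\psi\acap\sigma)$ and then invoking the graded commutativity of the cup product from~(\ref{eq:cup-symmetry}) to produce the factor $(-1)^{kl}$. The only difference is that you spell out the simplex-face bookkeeping that the paper delegates to its references.
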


\begin{proof}
The first statement follows easily from the definitions. For the second statement, it follows immediately from the definitions that 
$(\psi \acup \varphi) \acap \sigma= \varphi \acap (\psi \acap \sigma)$, see e.g.\ \cite[Lemma~53.5]{Friedl} for details. The desired statement is now a consequence of the aforementioned (anti-) commutativity of the cup product.
\end{proof}

Now let $M$ be an $n$-dimensional manifold. (Recall that all manifolds are assumed to be compact, oriented and path-connected.) As usual we denote by $[M]\in H_n(M,\partial M;\Z)$ the fundamental class. Let $G$ be an abelian group. The Poincar\'e duality theorem says that the map 
\[ \ba{rcl} \acap [M] \colon H^k(M;G)&\to& H_{n-k}(M,\partial M;G)\\
\varphi&\mapsto &\varphi\acap [M] \ea
\]
is an isomorphism. We denote by  $\op{PD}_M^G\colon H_{n-k}(M,\partial M;G)\to H^k(M;G)$ the inverse. 

Before we relate the Poincar\'e duality on a manifold to Poincar\'e duality on its boundary we need to discuss conventions. Given an $n$-dimensional oriented manifold $M$ we give the boundary $\partial M$ the orientation which is defined by the convention, that at a point $P\in \partial M$ a basis $v_1,\dots,v_{n-1}\in T_P(\partial M)$ is a positive basis if $w,v_1,\dots,v_{n-1}\in T_PM$ is a positive basis, where $w$ is an outward pointing vector of $T_PM$. With this convention the following lemma holds.
(We refer to \cite[Chapter~40]{Friedl} for a more detailed discussion on sign conventions.)

\begin{lemma}\label{lem:boundary-convention}
Let $M$ be an $n$-dimensional  oriented manifold. We denote by 
\[ \partial \colon H_n(M,\partial M;\Z)\,\,\to\,\, H_{n-1}(\partial M;\Z)\]
 the connecting homomorphism of the pair $(M,\partial M)$. Then
\[ \partial [M]\,\,=\,\, [\partial M]\,\in \, H_{n-1}(\partial M;\Z).\]
\end{lemma}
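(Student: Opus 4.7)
The strategy is to verify the identity locally at each boundary point. Recall that the fundamental class $[\partial M] \in H_{n-1}(\partial M;\Z)$ is characterized by the property that, for every $y\in \partial M$, its image under the canonical map $H_{n-1}(\partial M;\Z)\to H_{n-1}(\partial M,\partial M\setminus\{y\};\Z)$ is the local orientation generator $\mu_y^{\partial M}$ prescribed by the boundary-orientation convention. Therefore it suffices to show that for every $y\in \partial M$, the image of $\partial[M]$ under the analogous restriction map equals $\mu_y^{\partial M}$.

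\textbf{Step 1: Reduction to a half-space.} Fix $y\in \partial M$ and pick an oriented chart $\varphi\colon (U,U\cap\partial M)\to (\R^{n-1}\times[0,\infty),\R^{n-1}\times\{0\})$ sending $y$ to the origin and compatible with the given orientation of $M$. Consider the commutative diagram obtained from naturality of the connecting homomorphism and excision, relating the pair $(M,\partial M)$ (resp.\ $(M,M\setminus\{y\})$) with the local pair $(H,\partial H):=(\R^{n-1}\times[0,\infty),\R^{n-1}\times\{0\})$ (resp.\ $(H,H\setminus\{0\})$). Because $[M]$ restricts to the local orientation $\mu_y^M\in H_n(M,M\setminus\{y\};\Z)\cong H_n(H,H\setminus\{0\};\Z)$, the problem reduces to showing that the connecting map
\[\partial\colon H_n(H,H\setminus\{0\};\Z)\,\longrightarrow\, H_{n-1}(\partial H,\partial H\setminus\{0\};\Z)\]
of the triple $(H,\partial H\cup (H\setminus\{0\}),H\setminus\{0\})$ carries $\mu_0^M$ to the local orientation $\mu_0^{\partial M}$ dictated by the convention.

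\textbf{Step 2: Explicit model in the half-space.} Both groups are infinite cyclic. Represent $\mu_0^M$ by an affine singular $n$-simplex $\sigma\colon \Delta^n\to H$ whose face $[v_0,\dots,v_{n-1}]$ parametrizes a neighborhood of $0$ inside $\partial H$ and whose remaining vertex $v_n$ maps into the interior of $H$. A direct computation shows that $\partial \sigma$ equals the face $[v_0,\dots,v_{n-1}]$ plus $n$ further faces, all of which lie in $H\setminus\{0\}$ and hence vanish in $C_{n-1}(\partial H,\partial H\setminus\{0\};\Z)$; thus $\partial \mu_0^M$ is represented by the single face $[v_0,\dots,v_{n-1}]\hookrightarrow \partial H$.

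\textbf{Step 3: Sign check.} It remains to compare the orientation determined by this face on $\R^{n-1}=\partial H$ with the one prescribed by the convention ``outward normal first''. The outward pointing vector at $0\in\partial H$ is $-e_n$, so the convention declares a basis $v_1,\dots,v_{n-1}\in T_0\partial H$ to be positive precisely when $-e_n,v_1,\dots,v_{n-1}$ is a positive basis of $T_0H$. Choosing the simplex $\sigma$ so that $v_n$ is mapped in the direction $+e_n$ and the face $[v_0,\dots,v_{n-1}]$ carries the standard orientation of $\R^{n-1}$, an elementary check comparing the ordered bases then shows that the class of this face agrees with the local orientation generator $\mu_0^{\partial M}$. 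Tracing through the reduction of Step~1 proves the lemma.

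\textbf{Main obstacle.} The essential content is the sign tracking in Step~3: all the topology is built into the characterization of fundamental classes by local orientations, so the entire statement hinges on keeping track of the difference between the chain-level boundary operator and the geometric ``outward normal first'' convention. This is exactly why the paper takes pains to state the convention explicitly before the lemma.
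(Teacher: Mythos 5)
The paper does not actually prove this lemma: it only fixes the outward-normal-first convention and refers to \cite[Chapter~40]{Friedl} for the sign bookkeeping, so there is no in-paper argument to compare yours against. Your overall plan (characterize $[\partial M]$ by its local orientations, localize to a half-space chart, represent the local class by a single affine simplex, and check the sign on its distinguished face) is the standard route, and the opening reduction --- it suffices to show that $\partial[M]$ restricts to $\mu_y^{\partial M}$ in $H_{n-1}(\partial M,\partial M\setminus\{y\};\Z)$ for every $y\in\partial M$ --- is correct, since $\partial M$ is a closed manifold.

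However, Step 1 as written does not work. For a boundary point $y$ the group $H_n(M,M\setminus\{y\};\Z)\cong H_n(H,H\setminus\{0\};\Z)$ is \emph{zero}: both $H=\R^{n-1}\times[0,\infty)$ and $H\setminus\{0\}$ are contractible ($H\setminus\{0\}$ is star-shaped about any interior point). So there is no local orientation class $\mu_y^M$ at a boundary point for $[M]$ to restrict to; the fundamental class of a manifold with boundary is characterized by its restrictions at \emph{interior} points only. Likewise your triple $(H,\partial H\cup(H\setminus\{0\}),H\setminus\{0\})$ is degenerate, because $\partial H\cup(H\setminus\{0\})=H$, so its connecting homomorphism is a map between trivial groups and detects nothing. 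You need a different localization: for instance, pass to $H_n(M,\partial M\cup(M\setminus U);\Z)$ for a half-ball chart $U$ about $y$ (by excision this is $H_n(\ol{U},\partial\ol{U};\Z)\cong\Z$, and $[M]$ maps to a generator, as one sees by comparing with $H_n(M,M\setminus\{z\};\Z)$ for an interior point $z\in U$), and use the connecting map of the genuine triple $(M,\partial M\cup(M\setminus U),M\setminus U)$ together with excision to land in $H_{n-1}(\partial M,\partial M\setminus\{y\};\Z)$. Separately, Steps 2--3 assert rather than perform the sign computation: the face $[v_0,\dots,v_{n-1}]$ occurs in $\partial\sigma$ with coefficient $(-1)^n$, and this must be combined with the comparison between the orientation this affine face induces on $\partial H$ and the outward-normal-first orientation (whose outward vector at $0$ is $-e_n$). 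Since the entire content of the lemma is exactly this sign, appealing to ``an elementary check'' at that point leaves the crux unproved.
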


The following proposition follows from combining~\cite[Theorem VI.9.2]{Bredon} with Lemma~\ref{lem:boundary-convention}. (Alternatively see also \cite[Proposition~55.22]{Friedl}.) Note that in this instance the different sign convention of Bredon does not affect the outcome.

\begin{proposition}\label{prop:bredon}
Let $M$ be an $n$-dimensional oriented manifold and let $G$ be an abelian group.
We denote by $k\colon \partial M\to M$ the inclusion map.
Then for any $p\in \N_0$ the following diagram commutes up to the sign $(-1)^p$:
\[ \xymatrix@C1.95cm@R0.5cm{ H_{n-p}(M,\partial M;G)\ar[d]^(0.45){\partial}&
H^{p}(M;G)\ar[d]^(0.45){k^*}\ar[l]_-{\acap [M]}
 \\
 H_{n-p-1}(\partial M;G)&H^p(\partial M;G)\ar[l]_-{\acap[\partial M]}.}
\]
\end{proposition}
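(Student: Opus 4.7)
The plan is to verify the identity $\partial(\varphi \acap [M]) = (-1)^p\, k^*(\varphi) \acap [\partial M]$ for every $\varphi \in H^p(M;G)$ by working at the (co)chain level and then passing to homology. The key technical ingredient is the chain-level Leibniz rule for the cap product: for every $\varphi \in C^p(M;G)$ and $\sigma \in C_n(M;\Z)$,
\[ \partial(\varphi \acap \sigma) \,=\, (-1)^p\bigl(\varphi \acap \partial\sigma \,-\, \delta\varphi \acap \sigma\bigr). \]
This is proved by a direct but uneventful computation from the very definition of $\acap$ recalled in Section~\ref{section:cup-cap}, in complete analogy with \eqref{eq:CuoOtimesCoboundary}; a full verification under the sign conventions adopted in this paper is carried out in \cite{Friedl}.

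With this identity in hand, I would pick a cocycle $\varphi \in Z^p(M;G)$ representing the given cohomology class and a relative cycle $z \in C_n(M;\Z)$ representing the fundamental class $[M] \in H_n(M,\partial M;\Z)$, so that $\varphi \acap z \in C_{n-p}(M;G)$ represents $\varphi \acap [M]$. Since $\delta\varphi = 0$, the Leibniz rule collapses to $\partial(\varphi \acap z) = (-1)^p\,\varphi \acap \partial z$. By Lemma~\ref{lem:boundary-convention}, $\partial z = k_*(w)$ for a cycle $w \in C_{n-1}(\partial M;\Z)$ representing $[\partial M]$. The naturality of the cap product at the chain level (the chain-level incarnation of Lemma~\ref{lem:PropertiesCap}(1)) yields $\varphi \acap k_*(w) = k_*\bigl(k^*(\varphi) \acap w\bigr)$, and therefore
\[ \partial(\varphi \acap z) \,=\, (-1)^p\, k_*\bigl(k^*(\varphi) \acap w\bigr). \]
By the very definition of the connecting homomorphism $\partial\colon H_{n-p}(M,\partial M;G) \to H_{n-p-1}(\partial M;G)$, passing to homology now gives exactly $\partial(\varphi \acap [M]) = (-1)^p\cdot k^*(\varphi) \acap [\partial M]$, which is the required commutativity up to the sign $(-1)^p$.

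The main obstacle is pinning down the chain-level Leibniz rule with precisely the correct sign under the conventions fixed in Section~\ref{section:cup-cap}; once that identity is secured, the rest is a three-line manipulation combining $\delta\varphi = 0$, Lemma~\ref{lem:boundary-convention} and naturality of $\acap$. As the authors caution after Theorem~\ref{thm:Main}, sign bookkeeping is the genuine pitfall, but unlike in Lemma~\ref{lem:munkres} the sign $(-1)^p$ here arises directly and unambiguously from the single prefactor in the Leibniz rule, with no cup-product swap introducing an additional $(-1)^{pl}$ factor. This is why, as remarked in the statement, Bredon's divergent sign conventions happen to produce the same result in this instance.
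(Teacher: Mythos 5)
Your proof is correct, but it takes a genuinely different route from the paper, which does not argue at the chain level at all: the paper simply combines \cite[Theorem VI.9.2]{Bredon} with Lemma~\ref{lem:boundary-convention} and remarks that Bredon's divergent sign conventions do not affect the outcome in this instance. You instead give a self-contained verification. Your Leibniz rule $\partial(\varphi\acap\sigma)=(-1)^p\bigl(\varphi\acap\partial\sigma-\delta\varphi\acap\sigma\bigr)$ agrees exactly with the identity $\partial(\varphi\acap\sigma)=(-1)^{k+1}\delta\varphi\acap\sigma+(-1)^k(\varphi\acap\partial\sigma)$ displayed in the paper's proof of Lemma~\ref{lem:munkres}, so your sign is consistent with the conventions fixed in Section~\ref{section:cup-cap}; combining it with $\delta\varphi=0$, with $\partial[M]=[\partial M]$, with the chain-level naturality of $\acap$, and with the standard description of the connecting homomorphism indeed yields $\partial(\varphi\acap[M])=(-1)^p\,k^*(\varphi)\acap[\partial M]$. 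What your approach buys is an explicit, convention-independent source for the sign $(-1)^p$, very much in the spirit of the paper; what the paper's citation buys is brevity. One peripheral inaccuracy: your closing claim that the absence of a cup-product swap explains why Bredon's conventions give the same result is not a proof of that assertion --- the cap products in the two rows of the diagram act in different degrees, so passing to Bredon's conventions rescales the two horizontal arrows by signs whose ratio is itself $(-1)^p$, and the comparison is more delicate than your remark suggests. Since that sentence plays no role in your argument, the proof stands as written.
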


\subsection{The definition of linking form on rational homology spheres}
\label{section:linking-form}
Let $X$ be a topological space.  We denote by $\beta \colon H^k(X;\Q / \Z) \to H^{k+1}(X; \Z)$ the \emph{Bockstein homomorphism} which arises from the short exact sequence $0 \to \Z \to \Q \to \Q / \Z \to 0$ of coefficients. 
We define similarly the Bockstein homomorphism $\beta\colon H_k(X;\Q/\Z)\to H_{k-1}(X;\Z)$. 

\begin{lemma}\label{lem:munkres}
Let $Z$ be an $m$-dimensional compact manifold.
For any $k\in \{0,\dots,m-1\}$ the diagram
\[ \hspace{1cm} \xymatrix@C1.52cm@R0.5cm{ 
 H_{m-k-1}(Z;\Z)& H^{k+1}(Z;\Z) \ar[l]_-{\acap [Z]}\\
 H_{m-k}(Z;\Q/\Z) \ar[u]^(0.45){\beta}&H^k(Z;\Q/\Z)\ar[u]^(0.45){\beta} \ar[l]_-{\acap [Z]}\\
}\]
commutes up to the sign $(-1)^{k+1}$.
\end{lemma}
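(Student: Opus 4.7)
The plan is to unwind both compositions at the (co)chain level, making explicit use of the zig--zag description of the Bockstein and the Leibniz rule for the cap product. Throughout I assume $Z$ is closed, so that $[Z]\in H_m(Z;\Z)$ is represented by an integer cycle $z\in C_m(Z;\Z)$ with $\partial z=0$.

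Fix a class $\alpha\in H^k(Z;\Q/\Z)$ represented by a cocycle $\varphi\in C^k(Z;\Q/\Z)$, and choose a simplex--by--simplex lift $\tilde\varphi\in C^k(Z;\Q)$. Since $\delta\varphi=0$ in $\Q/\Z$, the coboundary $\delta\tilde\varphi$ takes values in $\Z\subset\Q$, and by the standard zig--zag description of the Bockstein the integer cocycle $\delta\tilde\varphi\in C^{k+1}(Z;\Z)$ represents $\beta(\alpha)$. On the homological side, $\varphi\acap z\in C_{m-k}(Z;\Q/\Z)$ represents $\alpha\acap[Z]$, while the $\Q$--chain $\tilde\varphi\acap z\in C_{m-k}(Z;\Q)$ is visibly a lift of $\varphi\acap z$ under the coefficient surjection $\Q\to\Q/\Z$.

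The heart of the argument is the Leibniz rule for the cap product. With the sign conventions adopted in Section~\ref{sec:Prelim} (following Hatcher), this rule reads
\[
\partial(\psi\acap\sigma)\;=\;(-1)^k\bigl(\psi\acap\partial\sigma\;-\;\delta\psi\acap\sigma\bigr)
\]
for any $k$--cochain $\psi$, as can be checked directly from the formulas defining $\acap$ and $\delta$. Specialising to $\psi=\tilde\varphi$ and $\sigma=z$, and using $\partial z=0$, yields
\[
\partial(\tilde\varphi\acap z)\;=\;(-1)^{k+1}\bigl(\delta\tilde\varphi\acap z\bigr),
\]
an equality of integer chains in $C_{m-k-1}(Z;\Z)$. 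By the zig--zag description of the homological Bockstein, the left--hand side represents $\beta(\alpha\acap[Z])$; the right--hand side represents $(-1)^{k+1}\,\beta(\alpha)\acap[Z]$. Taking homology classes gives precisely the desired commutativity up to $(-1)^{k+1}$.

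The only delicate point is the sign in the Leibniz identity, which is genuinely sensitive to the cup-- and cap--product conventions; the factor $(-1)^{k+1}$ appearing in the lemma is exactly what drops out of the Hatcher conventions the paper uses, and under Bredon's convention one would get the opposite sign (as the authors already note). Apart from this bookkeeping, everything is a routine cochain--level computation.
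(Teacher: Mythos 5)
Your argument is correct and takes essentially the same route as the paper's proof: the paper also reduces everything to the Leibniz identity $\partial(\varphi\acap \sigma)=(-1)^{k+1}\,\delta \varphi\acap \sigma+(-1)^k\,(\varphi\acap \partial \sigma)$ applied to a fundamental cycle, citing Munkres for the remaining zig-zag bookkeeping that you have simply written out in full. Your explicit verification (including the standing assumption that $Z$ is closed, which is what the diagram with absolute homology groups implicitly requires and what the application to $M(\varphi)$ uses) is a faithful completion of the details the paper leaves to the reader.
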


\begin{proof}
The lemma is basically  \cite[Lemma~69.2]{Munkres}, except that in the reference the sign is not specified. 
The sign comes from the following general fact:
Let $X$ be a topological space and let $G$ be an abelian group.
Furthermore let $\varphi\in C^k(X;G)$ and let $\sigma\colon \Delta^l\to X$ be a singular $l$-simplex. If $k\leq l$, then a straightforward calculation shows that
\[ \partial (\varphi\acap \sigma)\,\,=\,\, (-1)^{k+1} \cdot \delta \varphi\acap \sigma\,+\, (-1)^k\cdot (\varphi\acap \partial \sigma).\]
(If one takes the different sign conventions into account, this equality is exactly  \cite[Proposition~VI.5.1]{Bredon}.)
In our case $\sigma$ is a cycle that represents the fundamental class of $Z$. It is now clear that in our diagram the sign $(-1)^{k+1}$ appears. We leave the details of the precise argument to the reader.
\end{proof}

Now let $M$ be an $(2n+1)$-dimensional rational homology sphere with $n\geq 1$.  
In this case the Bockstein homomorphisms in homology and cohomology in dimension $n$ are in fact isomorphisms.
We denote by $\Omega$ the composition
\[
\ba{rcl} H_n(M;\Z)  \,\,\xrightarrow{\op{PD}_M^{\Z}}\,\, H^{n+1}(M;\Z) \,\,\xrightarrow{\beta^{-1}} \,\,H^n(M;\Q / \Z)&\xrightarrow{\op{ev}}& \op{Hom}_\Z(H_n(M;\Z),\Q / \Z)\\
\varphi&\mapsto & (\sigma \mapsto \langle \varphi,\sigma\rangle).\ea\]
of Poincar\'e duality, the inverse Bockstein and the Kronecker evaluation map.

\begin{definition}
\label{def:Linking}
The \emph{linking form} of a $(2n+1)$-dimensional rational homology sphere $M$ is the form
$$ \lambda_M \colon H_n(M;\Z) \times H_n(M;\Z) \to \Q / \Z$$
defined by $\lambda_M(a,b)=\Omega(a)(b).$
\end{definition}

We summarize some key properties of the linking form in the following lemma.

\begin{lemma}\label{lem:basics-linking-form}
Let $M$ be a  $(2n+1)$-dimensional rational homology sphere. Then the following statements hold:
\bnm[font=\normalfont]
\item $\lambda_M$ is bilinear and non-singular $($i.e.\ $\Omega$ is an isomorphism$)$,
\item  given $a$ and $b$ in $H_n(M;\Z)$, we have
$$ \lambda_M(a,b)
\,\,=\,\,\big\langle (\beta^{-1} \circ \op{PD}_M^\Z)(a) \acup \op{PD}_M^\Z(b),[M] \big\rangle, $$
\item if $n$ is odd, then the linking form $\lambda_M$ is symmetric, otherwise it is anti-symmetric.
\enm
\end{lemma}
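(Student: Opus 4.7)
For part (1), I would simply check that each of the three factors in the composition defining $\Omega$ is an isomorphism. Poincaré duality $\acap [M]$ is an isomorphism on a closed manifold. The Bockstein $\beta\colon H^n(M;\Q/\Z)\to H^{n+1}(M;\Z)$ is sandwiched in the long exact sequence
$$H^n(M;\Q)\to H^n(M;\Q/\Z)\xrightarrow{\beta} H^{n+1}(M;\Z)\to H^{n+1}(M;\Q),$$
whose outer terms vanish because $M$ is a rational homology sphere and $1\le n\le 2n$. Finally, evaluation is an isomorphism by the universal coefficient theorem since $\Q/\Z$ is divisible and therefore an injective $\Z$-module, killing the $\operatorname{Ext}$ term.

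For part (2), I would start from the defining formula $\lambda_M(a,b)=\langle\beta^{-1}\op{PD}_M^\Z(a),b\rangle$, substitute $b=\op{PD}_M^\Z(b)\acap[M]$, and then use Lemma~\ref{lem:PropertiesCap}(2) to rewrite the cap of a cap as a cup followed by a single cap. The sign introduced is $(-1)^{n(n+1)}$, which is always $+1$, so no sign tracking is required at this step.

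Part (3) is the main content. Set $\tilde u=\beta^{-1}\op{PD}_M^\Z(a)$ and $\tilde v=\beta^{-1}\op{PD}_M^\Z(b)$ in $H^n(M;\Q/\Z)$, so that $u:=\beta(\tilde u)=\op{PD}_M^\Z(a)$ and $v:=\beta(\tilde v)=\op{PD}_M^\Z(b)$. By part~(2),
$$\lambda_M(a,b)=\langle \tilde u\acup v,[M]\rangle,\qquad \lambda_M(b,a)=\langle \tilde v\acup u,[M]\rangle.$$
The key identity to establish is the Leibniz rule
$$\beta(\tilde u)\acup \tilde v+(-1)^{n}\,\tilde u\acup\beta(\tilde v)=0\quad\text{in }H^{2n+1}(M;\Q/\Z).$$
I would prove this at the cochain level: choose $\Q$-valued cochain lifts $\hat u,\hat v\in C^n(M;\Q)$ of $\tilde u,\tilde v$, so that $\delta\hat u$ and $\delta\hat v$ are integer cocycles representing $u$ and $v$. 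Applying the coboundary Leibniz formula \eqref{eq:CuoOtimesCoboundary} to $\hat u\acup\hat v\in C^{2n}(M;\Q)$ and reducing mod $\Z$ (where both terms make sense via the natural pairings $\Z\otimes\Q/\Z\cong\Q/\Z\cong\Q/\Z\otimes\Z$), shows that $u\acup\tilde v+(-1)^{n}\tilde u\acup v$ is the coboundary of $\hat u\acup\hat v\bmod\Z$, proving the identity.

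Combining the Leibniz identity with the graded commutativity \eqref{eq:cup-symmetry}, which contributes only the trivial sign $(-1)^{n(n+1)}=+1$ for the relevant degrees, yields
$$\lambda_M(b,a)=\langle \tilde v\acup u,[M]\rangle=\langle u\acup\tilde v,[M]\rangle=(-1)^{n+1}\langle\tilde u\acup v,[M]\rangle=(-1)^{n+1}\lambda_M(a,b),$$
which is exactly the claim. I expect the main obstacle to be bookkeeping: making sure the cochain-level Leibniz computation lands in the correct coefficient group after reducing mod $\Z$, and verifying that the two relevant instances of cup-commutativity really involve the tautological identification $\Z\otimes \Q/\Z\cong\Q/\Z\otimes\Z$ and therefore do not introduce hidden signs. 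Once this is handled carefully, the sign $(-1)^{n+1}$ falls out cleanly.
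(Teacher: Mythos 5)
Your proposal is correct and follows essentially the same route as the paper: part (1) via injectivity of $\Q/\Z$ and the universal coefficient theorem, part (2) via Lemma~\ref{lem:PropertiesCap}~(2) with the harmless sign $(-1)^{n(n+1)}=+1$, and part (3) via exactly the cochain-level Leibniz/Bockstein identity that the paper isolates as Lemma~\ref{lem:bockstein-cup}, combined with graded commutativity of the cup product. The only cosmetic difference is that you run part (2) from the definition toward the cup-product formula rather than the reverse, and you make explicit the long-exact-sequence argument for why the Bockstein is an isomorphism, which the paper only asserts.
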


\begin{proof}
It is clear that  $\lambda_M$ is bilinear. To show that $\lambda_M$ is non-singular we need to show that all three homomorphisms in the definition of $\Omega$ are isomorphisms. We only have to argue that the last homomorphism is an isomorphism, but this in turn is an immediate consequence of the  universal coefficient theorem and the fact that $\Q / \Z$ is an injective $\Z$-module.
 
We turn to the proof of (2). By the definition of the Kronecker pairing we have 
$$\big\langle (\beta^{-1} \circ \op{PD}_M^\Z)(a) \acup \op{PD}_M^\Z(b),[M] \big\rangle \,\,= \,\,\big((\beta^{-1} \circ \op{PD}_M^\Z)(a)  \acup \op{PD}_M^\Z(b)\big) \acap [M]. $$
Next, using  Lemma~\ref{lem:PropertiesCap} (2) and the fact that by definition we have $\op{PD}_M^\Z(b) \acap [M]=b$, we deduce that this expression reduces to  $(\beta^{-1} \circ \op{PD}_M^\Z)(a) \acap b$. (If we look at  Lemma~\ref{lem:PropertiesCap} (2) carefully we see that officially a term  $(-1)^{n(n+1)}$ appears, but fortunately this equals $+1$.) Looking back at Definition~\ref{def:Linking}, this is nothing but the linking form applied to $a$ and $b$, as claimed.

We postpone the proof of (3) to the next section.
\end{proof}

Lemma~\ref{lem:basics-linking-form}
might remind the reader of the intersection form of even-dimensional manifolds. In fact, since the proof of Theorem~\ref{thm:Main} will relate the linking form of $M(\varphi)$ to the intersection form of the Heegaard surface $F$, we briefly recall the definition of this latter form. Namely, given a closed oriented surface $F$, the \emph{intersection form} of $F$ with rational coefficients
$$ Q_F \colon H_1(F;\Q) \times H_1(F;\Q) \to \Q $$
is defined as
\[ Q_F(x,y)\,\,:=\,\, \big\langle \op{PD}_F^\Q(x)\acup \op{PD}_F^\Q(y),[F]\big\rangle\,\,=\,\, \big( \op{PD}_F^\Q(x)\acup \op{PD}_F^\Q(y)\big)\acap [F].\]
It follows immediately from Lemma~\ref{lem:PropertiesCap} (2) that for $x,y\in H_1(F;\Q)$ we have
\begin{equation}\label{eq:intersection-form-surface} 
\ba{rcl} 
Q_F(x,y)&=&\big( \op{PD}^\Q_F(x)\acup \op{PD}^\Q_F(y)\big)\acap [F]\\
&=& 
- \op{PD}^\Q_F(x)\acap \big( \op{PD}^\Q_F(y)\acap [F]\big)\,=\, 
 -\op{PD}_F^\Q(x)\acap y\,=\, -\big\langle\op{PD}_F^\Q(x),y\big\rangle.\ea
 \end{equation}

\subsection{Symmetry of the linking form}
\label{sec:Symmetry}
In this section, we shall give a short algebraic proof that the linking form is (anti-) symmetric. The idea is to use the definition of the linking form in terms of the cup product, see Lemma~\ref{lem:basics-linking-form} (2).

Throughout this section we denote by $\nu \colon \Q / \Z \otimes \Z \to \Q / \Z$
and  $\nu \colon \Z\otimes \Q / \Z  \to \Q / \Z$ the obvious isomorphisms.
Now  recall that by definition we can decompose the cup product $\acup$ as 
\begin{equation}
\label{eq:CupOtimes}
 H^k(M;\Z) \times H^l(M;\Q / \Z) \xrightarrow{\acup_{\otimes}} H^{k+l}(M;\Z\otimes \Q / \Z) \xrightarrow{\nu_*} H^{k+l}(M;\Q / \Z).
\end{equation}

\begin{lemma}\label{lem:bockstein-cup}
Let $X$ be a topological space. For any $x\in H^k(X;\Q / \Z)$ and $y\in H^l(X;\Q / \Z)$, we have
$$ \nu_* (\beta( x) \acup_{\otimes} y )\,\,=\,\,(-1)^{k+1} \cdot \nu_*(x\acup_{\otimes} \beta( y))\,\in\, H^{k+l+1}(X;\Q/\Z).$$
\end{lemma}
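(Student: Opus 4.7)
The plan is to work at the cochain level, exploiting the Leibniz rule (2.1) for the cup product together with the cochain-level description of the Bockstein coming from the short exact sequence $0 \to \Z \to \Q \to \Q/\Z \to 0$.

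Concretely, I would first choose cocycle representatives of $x$ and $y$ (still called $x \in Z^k(X;\Q/\Z)$ and $y\in Z^l(X;\Q/\Z)$) and lift them term-by-term to rational cochains $\tilde x\in C^k(X;\Q)$ and $\tilde y\in C^l(X;\Q)$ via the surjection $\pi\colon \Q\to\Q/\Z$. Since $x$ and $y$ are cocycles, $\delta\tilde x$ and $\delta\tilde y$ take values in the subgroup $\Z\subset\Q$, producing integral cochains $z_x\in C^{k+1}(X;\Z)$ and $z_y\in C^{l+1}(X;\Z)$. By the standard snake-lemma description of the connecting homomorphism, these represent $\beta(x)$ and $\beta(y)$ respectively.

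Next I would form the cup product $\tilde x\acup\tilde y\in C^{k+l}(X;\Q)$, where the coefficient pairing is ordinary multiplication $\mu\colon\Q\otimes\Q\to\Q$. The Leibniz formula~(2.1) yields
\[
\delta(\tilde x\acup\tilde y)\,\,=\,\,\delta\tilde x\acup\tilde y\,+\,(-1)^k\,\tilde x\acup\delta\tilde y\,\,\in\,\,C^{k+l+1}(X;\Q).
\]
Now I would push this identity forward via $\pi_*\colon C^{*}(X;\Q)\to C^{*}(X;\Q/\Z)$. The key observation is that $\pi$ intertwines the coefficient pairings: the composite $\Z\otimes\Q\xrightarrow{\,1\otimes\pi\,}\Z\otimes\Q/\Z\xrightarrow{\nu}\Q/\Z$ agrees with $\pi\circ(\text{mult. }\Z\otimes\Q\to\Q)$, and similarly on the other side. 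Therefore $\pi_*(\delta\tilde x\acup\tilde y)$ is precisely the cochain representative of $\nu_*(\beta(x)\acup_\otimes y)$ coming from the cocycles $z_x$ and $y$, and analogously $\pi_*(\tilde x\acup\delta\tilde y)$ represents $\nu_*(x\acup_\otimes\beta(y))$. On the other hand, $\pi_*(\delta(\tilde x\acup\tilde y))=\delta(\pi_*(\tilde x\acup\tilde y))$ is a coboundary, hence vanishes in $H^{k+l+1}(X;\Q/\Z)$.

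Passing to cohomology therefore gives $0=\nu_*(\beta(x)\acup_\otimes y)+(-1)^k\nu_*(x\acup_\otimes\beta(y))$, which rearranges to the stated identity. The main point requiring care is the third paragraph: one has to verify that projecting by $\pi$ the multiplicative pairings $\Z\otimes\Q\to\Q$ and $\Q\otimes\Z\to\Q$ really factors as $\nu$ composed with the coefficient change $1\otimes\pi$ or $\pi\otimes 1$, so that the mixed cup products on $C^*(X;\Q)$ descend to exactly the cup products $\acup$ of the lemma. Once that naturality is in place the sign $(-1)^{k+1}$ is immediate from the Leibniz rule and no further computation is required.
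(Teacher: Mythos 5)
Your proposal is correct and follows essentially the same route as the paper: lift to rational cochains, identify $\beta$ via the snake-lemma description, apply the Leibniz rule~(2.1), and push forward along the coefficient diagram relating $\mu\colon\Q\otimes\Q\to\Q$ and the maps $\nu$. The one point you flag as needing care (that $\rho$ intertwines the pairings) is exactly what the paper records in its commutative diagram~(2.6), and your observation that the pushed-forward terms are genuine cocycles (being products of the cocycles $z_x$, $y$, resp.\ $x$, $z_y$) matches the paper's justification for passing to cohomology classes term by term.
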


\begin{proof}
We denote by $\rho$ the canonical projection from $\Q$ to $\Q / \Z$.
Pick $f$ in $C^k(X;\Q)$ and $g$ in $C^l(X;\Q)$ so that $[\rho_*(f)]=x$ and $[\rho_*(g)]=y$.  
The usual mild diagram chase in the definition of the Bockstein homomorphism
shows that there exist unique cocycles $\beta(f)$ in $C^{k+1}(X;\Z)$ and $\beta(g)$ in $C^{l+1}(X;\Z)$ which satisfy $\iota_*(\beta(f))=\delta(f)$ and $\iota_*(\beta(g))=\delta(g)$; here $\delta$ denotes the coboundary map and $\iota$ denotes the inclusion map $\Z \to \Q$. Using~(\ref{eq:CuoOtimesCoboundary}) together with the definition of $\beta(f)$ and $\beta(g)$, we have the following equality in $H^{k+l+1}(X;\Q\otimes \Q)$:
\begin{equation}
\label{eq:CupSymmetric}
0=[\delta(f\acup_{\otimes} g)] 
= [\delta( f)\acup_{\otimes} g+(-1)^k \cdot f\acup_{\otimes} \delta( g)]
=[\iota_*(\beta(f))\acup_{\otimes} g+(-1)^k \cdot f\acup_{\otimes} \iota_*(\beta(g))].
\end{equation}
In order to relate the right hand side of~(\ref{eq:CupSymmetric}) to the expressions which appear in the statement of the lemma, we consider the following commutative diagram of group homomorphisms
\begin{equation}
\label{eq:CoeffDiagram}
\xymatrix@C1.2cm@R0.6cm{ 
\Z\otimes \Q\ar[r]^{\iota\otimes \id}\ar[d]_(0.45){\id\otimes \rho}& \Q\otimes \Q \ar[d]^(0.45)\mu &\ar[l] \Q\otimes \Z\ar[d]^(0.45){\rho \otimes \id}\ar[l]_{\id\otimes \iota}  \\ 
\Z\otimes \Q / \Z\ar[dr]^{\nu} &\Q\ar[d]^(0.45) \rho & \Q / \Z\otimes \Z\ar[dl]_{\nu}\\
&\Q / \Z,&}
\end{equation}
where $\nu$ and $\mu$ stand for the obvious multiplication maps. Using~(\ref{eq:CupSymmetric}) and the commutativity of~(\ref{eq:CoeffDiagram}), we get the following equality in $H^{k+l+1}(X;\Q / \Z)$:
\[ \ba{rlll}0&=\,\,(\rho\circ \mu)_*\big(\big[i_*(\beta(f))\acup_{\otimes} g\,\,+\,\,(-1)^k\cdot f\acup_{\otimes} i_*(\beta(g))\big]\big)\\[0.05cm]
&=\,\,\big[\nu_*\big(\beta(f)\acup_{\otimes} \rho_*(g)\big)\,\,+\,\,(-1)^k\cdot \nu_*\big(\rho_*(f)\acup_{\otimes} \beta(g)\big)\big]\\
&=\,\,\nu_*\big([\beta(f)]\acup_{\otimes} [\rho_*(g)]\big)\,\,+\,\,(-1)^k\cdot\nu_*\big([\rho_*(f)]\acup_{\otimes} [\beta(g)]\big)\\
&=\,\,\nu_*(\beta(x)\acup_{\otimes} y)\,\,+\,\,(-1)^k\cdot\nu_*(x\acup_{\otimes} \beta(y)).
\ea\]
Note that the third equality follows from the fact that  $\beta(f)\in C^{k+1}(X;\Z)$, $\rho_*(g)\in C^k(X;\Q/\Z)$, $\rho_*(f)\in C^l(X;\Q/\Z)$ and $\beta(g)\in C^{l+1}(X;\Z)$ are cocycles. 
The lemma now follows immediately.
\end{proof}

We can now finally provide the proof of Proposition~\ref{prop:linking-form-symmetric}.
For the reader's convenience we recall the statement.
\\

\noindent \textbf{Proposition~\ref{prop:linking-form-symmetric}.} \emph{Let $M$ be a $(2n+1)$-dimensional rational homology sphere. If $n$ is odd, then the linking form $\lambda_M$ is symmetric, otherwise it is anti-symmetric.}\\

\begin{proof}
Given $a$ and $b$ in $H_n(M;\Z)$, we set $x:=\beta^{-1}(\PD_M^\Z(a))$ and $y:=\beta^{-1}(\PD_M^\Z(b))$. Using Lemma~\ref{lem:basics-linking-form} (2), the factorization described in~(\ref{eq:CupOtimes}) and Lemma~\ref{lem:bockstein-cup}, we obtain
\begin{equation}
\label{eq:PresqueFiniiiii}
\ba{rcl} 
 \lambda_M(a,b)&=& \big\langle (\beta^{-1} \circ \op{PD}_M^\Z)(a) \acup \op{PD}_M^\Z(b),[M] \big\rangle\\
 &=& \langle \nu_*(x \acup_{\otimes} \beta(y) ),[M]\rangle\,\, =\,\,(-1)^{n+1} \cdot  \langle \nu_*(\beta(x) \acup_{\otimes} y),[M]\rangle.\ea
 \end{equation}
Since $n(n+1)$ is even it follows from (\ref{eq:cup-symmetry}) that 
$$\lambda_M(a,b)=(-1)^{n+1}  \cdot \langle \nu_*(y\acup_{\otimes} \beta(x)),[M]\rangle.$$
Proceeding as in~(\ref{eq:PresqueFiniiiii}), this is nothing but $\lambda_M(b,a)$, which concludes the proof of the proposition.
\end{proof}

\section{Proof of Theorem~\ref{thm:Main}}
\label{sec:Proof}

Our proof of Theorem~\ref{thm:Main} decomposes into two main steps. First, we provide a convenient presentation of $H_1(M;\Z)$, then we compute the linking form. We recall some of the notation from the introduction and we add a few more definitions which shall be used throughout this chapter.
 \bnm
\item We denote by $X_g$ a fixed handlebody of genus $g$ and we equip it with an orientation. We  denote by $Z_g$ a copy of $X_g$ which we also view as an oriented manifold.
\item We write $F_g=\partial X_g=\partial Z_g$.
\item We denote by $a_1,\ldots,a_g,b_1,\ldots,b_g\in H_1(F_g;\Z)$ a symplectic basis for   $H_1(F_g;\Z)$ such that  $a_1,\ldots,a_g$ form a basis for $H_1(X_g;\Z)$. In particular the  intersection numbers are given by $a_i \cdot b_j=\delta_{ij},b_i \cdot a_j=-\delta_{ij}, a_i \cdot a_j =0$ and $b_i \cdot b_j=0$ for $i=1,\ldots,g$.
Note that this implies that  $b_1,\ldots,b_g$ represent the zero element in $H_1(X_g;\Z)$. By a slight abuse of notation we also denote by $a_i\in H_1(X_g;\Z)$ the image of $a_i$ under the inclusion induced map $H_1(F_g;\Z)\to H_1(X_g;\Z)$.
\item Sometimes we will use the bases of (3)  to make the identifications $H_1(F_g;\Z)=\Z^{2g}$ and  $H_1(X_g;\Z)=\Z^g$. Furthermore, since $Z_g$ is a copy of $X_g$ we can use the same basis as for $H_1(X_g;\Z)$ to make the identification $H_1(Z_g;\Z)=\Z^g$.
\item Given an orientation-reversing self-diffeomorphism $\varphi$ of the genus $g$ surface $F_g$ we write $M(\varphi):=X_g \cup_{\varphi} Z_g$ where we identify $x\in F_g=\partial X_g$ with $\varphi(x)\in \partial Z_g$. 
Furthermore  we denote by
\[
\bpp A_\varphi&B_\varphi\\C_\varphi&D_\varphi \epp\]
  the matrix that represents $\varphi_*\colon H_1(F_g;\Z)\to H_1(F_g;\Z)$ with respect to the ordered basis $a_1,\dots,a_g,b_1,\dots,b_g$.
If $\varphi$ is understood, then we drop it from the notation.
\item The following diagram summarizes the various inclusion maps arising in the subsequent discussion:
\[ \hspace{1cm} \xymatrix@C0.8cm@R0.3cm{ & F_g \ar[dl]_j \ar[dd]^i\ar[dr]^k &\\ X_g\ar[dr]_l&&Z_g\ar[dl]^m \\ &M.}\]
We give $F_g\subset M$ the orientation given by $F_g=\partial X_g$ and viewing $X_g$ as a submanifold of $M$. Note that with all of our conventions we have $[F_g]=-[\partial Z_g]$ if we view $Z_g$ as a submanifold of $M$.
\item 
If $g$ is understood, then we  drop it from the notation.
\enm


\subsection{A presentation for $H_1(M;\Z)$}
\label{sub:Presentation}
We start with an elementary lemma.

\begin{lemma}
\label{lem:MatrixProperties}
Let $\varphi$ be a self-diffeomorphism of $F=F_g$. 
We have
\[ \begin{pmatrix} A & B \\ C & D  \end{pmatrix}^{-1}\,\,=\,\,
\begin{pmatrix}\hfill  D^T & -B^T \\ -C^T &\hfill  A^T  \end{pmatrix}.\]
In particular we have $AB^T=BA^T$.
\end{lemma}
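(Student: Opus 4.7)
The key input is that every self-diffeomorphism $\varphi$ of the closed oriented surface $F_g$ preserves the intersection pairing on $H_1(F_g;\Z)$ up to an overall sign $\epsilon \in \{\pm 1\}$ (with $\epsilon=+1$ precisely when $\varphi$ is orientation-preserving). Since the intersection form is represented, in the chosen symplectic basis, by the matrix $J := \begin{pmatrix} 0 & I_g \\ -I_g & 0 \end{pmatrix}$, this invariance translates immediately into the matrix identity
\[
M^T J M \;=\; \epsilon\, J, \qquad M := \begin{pmatrix} A & B \\ C & D \end{pmatrix}.
\]

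From here the argument is purely algebraic. Since $J$ is invertible (indeed $J^{-1} = -J$), the identity above forces $M$ itself to be invertible and rearranges to $M^{-1} = \epsilon\, J^{-1} M^T J$. A one-line block computation then yields
\[
J^{-1} M^T J \;=\; \begin{pmatrix} D^T & -B^T \\ -C^T & A^T \end{pmatrix},
\]
which for $\epsilon = +1$ is exactly the stated formula. The ``in particular'' assertion $AB^T = BA^T$ follows by multiplying out and inspecting the $(1,2)$-block of $M \cdot \begin{pmatrix} D^T & -B^T \\ -C^T & A^T \end{pmatrix} = \pm I$: this block equals $-AB^T + BA^T$, so it must vanish.

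There is no serious obstacle here; everything reduces to two short block computations once the (anti)symplecticity of $M$ has been noted. The only point that warrants a bit of bookkeeping is the sign $\epsilon$, and it is worth remarking that the conclusion $AB^T = BA^T$ (like the analogous $CD^T = DC^T$) can in fact be read off directly from the $(1,2)$- and $(2,1)$-blocks of $M^T J M = \epsilon J$, so it holds irrespective of whether $\varphi$ preserves or reverses orientation.
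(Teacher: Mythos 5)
Your proof is correct and follows essentially the same route as the paper: both derive $M^{-1}=J^{-1}M^TJ$ from the (anti)symplecticity of $\varphi_*$ and finish with a block computation, extracting $AB^T=BA^T$ from the vanishing off-diagonal block of $M\cdot M^{-1}$. Your bookkeeping of the sign $\epsilon$ is in fact a welcome refinement: the diffeomorphisms $\varphi$ relevant to the rest of the paper are orientation-reversing, so $\varphi_*$ is anti-symplectic ($\epsilon=-1$) and the displayed inverse formula holds only up to that overall sign --- a point the paper's proof passes over by simply declaring $\varphi_*$ symplectic. One small correction to your closing aside: the $(1,1)$- and $(2,2)$-blocks of $M^TJM=\epsilon J$ yield $A^TC=C^TA$ and $B^TD=D^TB$, and its $(1,2)$-block yields $A^TD-C^TB=\epsilon I_g$; to read off $AB^T=BA^T$ and $CD^T=DC^T$ directly from a congruence one needs the companion identity $MJM^T=\epsilon J$, whose diagonal blocks are $AB^T-BA^T$ and $CD^T-DC^T$. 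Your main derivation via the $(1,2)$-block of $M\cdot\left(\begin{smallmatrix} D^T & -B^T\\ -C^T & A^T\end{smallmatrix}\right)=\pm I$ is unaffected by this.
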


\begin{proof}
Since $\varphi_*$ is a symplectic automorphism of $H_1(F;\Z)$, it follows that the matrix $R:=\left(\begin{smallmatrix}A&B\\C&D\end{smallmatrix}\right)$ preserves the symplectic matrix $J:=\left(\begin{smallmatrix}0&I_g\\-I_g&0\end{smallmatrix}\right)$. In other words we have $R^TJR=J$ which immediately implies that $R^{-1}=J^{-1}R^TJ$.
The first statement now follows from an elementary calculation.  

The second statement follows from multiplying the matrix $R=\left(\begin{smallmatrix}A&B\\C&D\end{smallmatrix}\right)$ with the inverse we just calculated and considering the top right corner which necessarily needs to be the zero matrix.
\end{proof}

Using this lemma we can provide a presentation matrix for $H_1(M(\varphi);\Z)$.

\begin{proposition}
\label{prop:Presentation}
Let $\varphi$ be a self-diffeomorphism of $F=F_g$. Then the following statements hold:
\bnm[font=\normalfont]
\item 
The abelian group $H_1(M;\Z)$ is generated by $i_*(a_1),\ldots,i_*(a_g)$ and with respect to this generating set,  $B^T$ is a presentation matrix. More precisely, the homomorphism $\Z^g\to H_1(M;\Z)$ given by $e_r\mapsto i_*(a_r)$ is an epimorphism and its kernel is given by $B^T\cdot \Z^g$. 
\item If $M=M(\varphi)$ is a $3$-dimensional rational homology sphere, then $\det(B)\ne 0$, i.e.\ $B$ is invertible over the rationals. 
\enm
\end{proposition}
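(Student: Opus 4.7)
The plan is to apply the Mayer--Vietoris sequence to the Heegaard decomposition $M=X_g\cup_{F_g}Z_g$. Since $F_g$, $X_g$ and $Z_g$ are all path-connected, the map $H_0(F_g)\to H_0(X_g)\oplus H_0(Z_g)$ is injective, so the sequence yields an isomorphism $H_1(M)\cong\coker(\Phi)$, where $\Phi\colon H_1(F_g)\to H_1(X_g)\oplus H_1(Z_g)$ is given by $\Phi(\alpha)=(j_*\alpha,-k_*\alpha)$. In the symplectic basis of $H_1(F_g)\cong\Z^{2g}$ and the basis $a_1,\dots,a_g$ of $H_1(X_g)\cong H_1(Z_g)\cong\Z^g$, the map $j_*$ is represented by $(I_g\mid 0)$ because the $b_i$ bound disks in $X_g$, while $k_*$ equals $\varphi_*$ followed by the projection killing the $b_i$-generators and is thus represented by the block matrix $(A_\varphi\mid B_\varphi)$.

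For part~(1), I would consider the homomorphism $\iota\colon\Z^g\to H_1(M)$ sending $e_r$ to the class of $(e_r,0)$ in $\coker(\Phi)$, which coincides with $l_*(a_r)=i_*(a_r)$. The statement then amounts to showing $\iota$ is surjective with kernel exactly $B_\varphi^T\Z^g$. Surjectivity reduces to the identity $A_\varphi\Z^g+B_\varphi\Z^g=\Z^g$, which follows from $A_\varphi D_\varphi^T-B_\varphi C_\varphi^T=I_g$, i.e.\ the top-left block of the matrix product of $\varphi_*$ with the inverse formula from Lemma~\ref{lem:MatrixProperties}.

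Identifying $\ker(\iota)$ is the delicate step. Unwinding the definition, $\ker(\iota)=K:=\{v\in\Z^g:A_\varphi v\in B_\varphi\Z^g\}$. The inclusion $B_\varphi^T\Z^g\subseteq K$ uses precisely the identity $A_\varphi B_\varphi^T=B_\varphi A_\varphi^T$ stated in Lemma~\ref{lem:MatrixProperties}: if $v=B_\varphi^T d$ then $A_\varphi v=B_\varphi(A_\varphi^T d)$. The reverse inclusion $K\subseteq B_\varphi^T\Z^g$ is the main hurdle. The plan is to extract the two further block identities $D_\varphi^T A_\varphi-B_\varphi^T C_\varphi=I_g$ and $D_\varphi^T B_\varphi=B_\varphi^T D_\varphi$ from the expansion of $\varphi_*^{-1}\varphi_*=\mathrm{id}$. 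Given $A_\varphi v=B_\varphi w$, the first identity rewrites $v=D_\varphi^T A_\varphi v-B_\varphi^T C_\varphi v=D_\varphi^T B_\varphi w-B_\varphi^T C_\varphi v$, and the second collapses this to $B_\varphi^T(D_\varphi w-C_\varphi v)$, as desired. Combining these two distinct symplectic relations in exactly this way is the key algebraic trick.

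Finally, part~(2) follows at once from~(1): if $M$ is a rational homology sphere then $H_1(M;\Q)=0$, while tensoring the presentation of~(1) with $\Q$ gives $H_1(M;\Q)\cong\Q^g/B_\varphi^T\Q^g$. The vanishing of this quotient forces $B_\varphi^T$, and hence $B_\varphi$, to be invertible over $\Q$.
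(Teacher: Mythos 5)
Your proof is correct. Both you and the paper start from the Mayer--Vietoris sequence of the Heegaard splitting $M=X_g\cup_{F_g}Z_g$, but you diverge in how the cokernel is identified, and the difference is instructive. The paper parametrizes the gluing surface as $\partial Z$, so that the Mayer--Vietoris map has components $\varphi_*^{-1}$ (into $H_1(X)$) and the honest inclusion $\iota_*$ (into $H_1(Z)$); since $\iota_*$ is onto with kernel $0\oplus\Z^g$, the presentation collapses at once to $\ker(\iota_*)\xrightarrow{\varphi_*^{-1}}H_1(X)\to H_1(M)\to 0$, and the restricted map is read off as the block $-B^T$ from the inverse formula of Lemma~\ref{lem:MatrixProperties} --- no further algebra is needed. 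You instead parametrize by $\partial X$, which puts $\varphi_*$ (rather than its inverse) into the second component, and you then compute the cokernel by hand: surjectivity from $A D^T-B C^T=I_g$, and the kernel identification $\{v: A v\in B\Z^g\}=B^T\Z^g$ from the pair of identities $D^T A-B^T C=I_g$ and $D^T B=B^T D$ together with $A B^T=B A^T$. All of these do follow from expanding $\varphi_*^{-1}\varphi_*=\varphi_*\varphi_*^{-1}=\mathrm{id}$ as you indicate, so the argument is complete; it is simply more computational where the paper's choice of direction makes the kernel visible immediately. Your deduction of (2) by tensoring the presentation with $\Q$ matches what the paper leaves implicit.
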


\begin{proof}
We denote by $\iota\colon \partial Z\to Z$ the inclusion map. Since all the spaces involved are connected, the Mayer-Vietoris sequence of $M=X \cup_F Z$ yields the exact sequence
\begin{equation}
\label{eq:MayerVietoris}
 H_1(\partial Z;\Z) \,\,\xrightarrow{\Big(\hspace{-0.1cm}\ba{c} \hfill \varphi^{-1}_*\\ -\iota_*\ea\hspace{-0.1cm}\Big)} \,\,\ba{c} H_1(X;\Z)\\\oplus\\  H_1(Z;\Z)\ea \,\, \xrightarrow{ l_*\oplus m_*}\,\, H_1(M;\Z)\,\, \to \,\,0.
\end{equation}
Recalling our choice of bases, we observe that the inclusion induced map $\iota_*\colon H_1(\partial Z;\Z)\to H_1(Z;\Z)$ is represented by the matrix $(I_g \ 0)$.
Furthermore, by Lemma~\ref{lem:MatrixProperties}  the map $\varphi_*^{-1}$ is represented by $(D^T \ -B^T)$. The map $\iota_*\colon H_1(\partial Z;\Z)\to H_1(Z;\Z)$ is evidently an epimorphism and thus we see that the exact sequence displayed in~(\ref{eq:MayerVietoris}) reduces to 
$$ \ker(\iota_*) \,\,\xrightarrow{\varphi_*^{-1}}\,\, H_1(X;\Z) \,\,\to \,\,H_1(M;\Z)\,\, \to\,\, 0.$$
Evidently, $\ker(\iota_*)=0\oplus \Z^g$.  Since $\varphi_*^{-1}$ is represented by the matrix $(D^T \ -B^T)$, we deduce that the restriction of $\varphi_*$ to $\ker(\iota_*)$ is represented by $-B^T$, as desired. This concludes the proof of the first statement.

The second statement of the proposition  is an immediate consequence of the first statement.
\end{proof}

\subsection{The computation of the linking form}
\label{sub:Computation}
Recall that we denote by  $i \colon F \to M$  the inclusion. The proof of Theorem~\ref{thm:Main} is based on the following observation. If we manage to find a map $\theta \colon H_1(M;\Z) \to H_1(F;\Q / \Z)$ which makes the diagram
\begin{equation}
\label{eq:Wanted}
\xymatrix@C1.4cm@R0.55cm{H_1(M;\Z) \ar[r]^-{\op{PD}_M^\Z} \ar[d]^(0.45)\theta& H^2(M;\Z) \ar[r]^-{\beta^{-1}} & H^1(M;\Q/ \Z)  \ar[d]^(0.45){i^*}\\
H_1(F;\Q / \Z)\ar[rr]^-{\op{PD}_F^{\Q/ \Z}} && H^1(F;\Q  / \Z) &  }
\end{equation}
commute, then we can reduce the calculation of the Poincar\'e duality in the 3-manifold $M$ to the much-better understood Poincar\'e duality of the surface $F$ and it will be fairly easy to compute the linking form.

Indeed, assuming such a map $\theta$ exists, we claim that the computation of 
\[ \lambda_M \circ (i_* \times i_*)\colon H_1(F;\Z)\times H_1(F;\Z)\,\to\, \Q/\Z\]
boils down to the computation of $\theta \circ i_*$. 
More precisely, for $v,w\in H_1(F;\Z)$ we apply successively the definition of the linking form, the naturality of the evaluation map (which is a consequence of  Lemma~\ref{lem:PropertiesCap} (1)) and the commutativity of~(\ref{eq:Wanted}) to obtain that:
\begin{align}
\label{eq:Reduction}
 \lambda_M(i_*(v),i_*(w))&=\,\,\big\langle (\beta^{-1} \circ \op{PD}_M^\Z \circ i_*)(v), i_*(w) \big\rangle_M
\,\,=\,\,\big\langle (i^* \circ  \beta^{-1} \circ \op{PD}_M^\Z \circ i_*)(v),w \big\rangle_F \\
&=\,\,\big\langle (\op{PD}_F^{\Q / \Z} \circ  \theta \circ i_*) (v), w \big\rangle_F\,\,\in\,\,\Q/\Z. \nonumber
\end{align}
Summarizing, the proof of Theorem~\ref{thm:Main} now decomposes into two steps: firstly, we define the map $\theta \colon H_1(M;\Z) \to H_1(F;\Q / \Z)$ (and check that it makes~(\ref{eq:Wanted}) commute) and secondly, we compute~$\theta \circ i_*$. To carry out the first step, define $\theta$ as the composition
\begin{equation}
\label{eq:Theta}
H_1(M;\Z) \,\xrightarrow{\,\beta^{-1}\,} \, H_2(M;\Q/\Z)\, \xrightarrow{\,p_*\,}\,  H_2(M,X;\Q/\Z) \,\xleftarrow{\,\cong\,}\, H_2(Z,F;\Q/\Z) \,\xrightarrow{\partial} \, H_1(F;\Q/\Z)
\end{equation}
of the following maps: the inverse homological Bockstein homomorphism, the map induced by the obvious map $p\colon (M,\emptyset) \to (M,X)$, the inverse of the excision isomorphism
(which is applicable since $(M,X,Z)$ is an excisive triad, full details can be found in \cite[Chapter~43]{Friedl}) and the connecting homomorphism of the long exact sequence of the pair $(Z,F)$ with $\Q/\Z$-coefficients. 

\begin{lemma}
\label{lem:ThetaWorks}
The homomorphism $\theta$ defined in~(\ref{eq:Theta}) makes~(\ref{eq:Wanted}) commute. More precisely, we have
$$\op{PD}_F^{\Q / \Z} \circ \theta\,\,=\,\, i^* \circ \beta^{-1} \circ \op{PD}_M^\Z\colon H_1(M;\Z)\,\to\, H^1(F;\Q/\Z).$$
\end{lemma}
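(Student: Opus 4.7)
Given $a \in H_1(M;\Z)$, set $\alpha := \beta^{-1}(\op{PD}_M^\Z(a)) \in H^1(M;\Q/\Z)$. The plan is to follow $a$ through each stage of the composition defining $\theta$ and to show that at every stage the intermediate class is a cap product involving a pullback of $\alpha$, so that after the final connecting homomorphism $\theta(a)$ coincides with $i^*(\alpha)\acap[F]$. Applying $\op{PD}_F^{\Q/\Z}$ to this equation then produces the claimed identity.

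First I would apply Lemma~\ref{lem:munkres} to $M$ with $m=3$, $k=1$ to identify $\beta^{-1}(a)$ with $\alpha\acap [M]$: the sign $(-1)^{k+1}$ equals $+1$, so $\beta(\alpha\acap [M]) = \beta(\alpha)\acap [M] = \op{PD}_M^\Z(a)\acap [M] = a$, and since $\beta\colon H_2(M;\Q/\Z)\to H_1(M;\Z)$ is an isomorphism ($M$ being a rational homology sphere) this gives $\beta^{-1}(a) = \alpha\acap [M]$.

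Next I would push this class forward via $p_*$ and invert excision. Naturality of the cap product (Lemma~\ref{lem:PropertiesCap}(1)) yields $p_*(\alpha\acap [M]) = \alpha\acap p_*[M]$, and a standard Mayer--Vietoris/excision check identifies $p_*[M]\in H_3(M,X;\Z)$ with the image under excision of the fundamental class $[Z]\in H_3(Z,F;\Z)$ (the orientation of $Z$ being inherited from $M$). Applying Lemma~\ref{lem:PropertiesCap}(1) once more to the inclusion $(Z,F)\hookrightarrow(M,X)$ then identifies the excision preimage of $p_*\beta^{-1}(a)$ with the class $\widetilde{\eta} := m^*(\alpha)\acap [Z] \in H_2(Z,F;\Q/\Z)$. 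The last step is to compute $\partial\widetilde{\eta}$ using Proposition~\ref{prop:bredon} applied to $Z$ with $n=3$, $p=1$, $G=\Q/\Z$: the relevant square commutes up to $(-1)^p = -1$, so $\partial\widetilde{\eta} = -k^*(m^*(\alpha))\acap [\partial Z]$, which becomes $i^*(\alpha)\acap [F]$ after using $i = m\circ k$ and the convention $[F] = -[\partial Z]$. Therefore $\theta(a) = i^*(\alpha)\acap [F]$, and applying $\op{PD}_F^{\Q/\Z}$ yields $\op{PD}_F^{\Q/\Z}(\theta(a)) = i^*(\alpha) = i^*\beta^{-1}\op{PD}_M^\Z(a)$, as required.

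The main obstacle is sign bookkeeping: three signs interact, namely $(-1)^{k+1}$ from Lemma~\ref{lem:munkres} (here $+1$), $(-1)^p$ from Proposition~\ref{prop:bredon} (here $-1$), and the paper's orientation convention $[F] = -[\partial Z]$. The latter two cancel exactly, which is precisely what makes the identity hold on the nose; given the authors' repeated warnings that signs have been the recurring difficulty in the paper, this is the step I would recheck most carefully.
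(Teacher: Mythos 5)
Your argument is correct and follows essentially the same route as the paper's proof: the paper organizes the same three steps (Lemma~\ref{lem:munkres} with sign $(-1)^{2}=+1$, naturality of the cap product via $p_*[M]=q_*[Z]$, and Proposition~\ref{prop:bredon} with sign $(-1)^{1}=-1$ cancelling against $[\partial Z]=-[F]$) into one large commutative diagram rather than an element chase, but the content and the sign bookkeeping are identical. No gaps.
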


\begin{proof}
We consider the maps of pairs $p\colon  (M,\emptyset) \to (M,X)$ and  $q \colon (Z,F) \to (M,X)$.
Note that our orientation conventions from the beginning  of the section implies that we have $p_*([M])=q_*([Z])$ and that $[\partial Z]=-[\partial X]=-[F]$.

Recall that by definition, capping with the fundamental class is the inverse of the Poincar\'e duality isomorphism. Keeping this in mind, the lemma will be proved if we manage to show that the following diagram commutes:
\[ \xymatrix@C1.57cm@R0.55cm{ H_1(M;\Z)\ar@<-1.2pc>@/_4pc/[ddddd]_{\theta}
 &&\ar[ll]_{\acap [M]}^\cong  H^2(M;\Z) \\
\ar[u]^\cong_{\beta}^\cong H_2(M;\Q/\Z)\ar[d]^{p_*}&& \ar[ll]_{\acap [M]}^\cong \ar[u]^(0.45){\beta}
 H^1(M;\Q/\Z)  \ar[dd]_{m^*}\ar@<1pc>@/^3pc/[dddd]^{i^*} \\
H_2(M,X;\Q/\Z)\\
H_2(Z,F;\Q/\Z)\ar[u]_(0.45){q_*}^\cong \ar[d]^(0.45){\partial} &&\ar[ll]_{\acap [Z]}^\cong H^1(Z;\Q/\Z)\ar[d]_{k^*}\\
H_1(F;\Q/\Z)\ar[d]^(0.45)= && \ar[ll]_{\acap- [\partial Z]}^\cong H^1(F;\Q/\Z)\ar[d]_(0.45)=\\
H_1(F;\Q/\Z) && \ar[ll]_{\acap [F]}^\cong H^1(F;\Q/\Z).}\]
Indeed, starting from the upper right corner and traveling to the lower left corner, the leftmost path produces the map $\theta \circ (\op{PD}_M^{\Z})^{-1}$, while the rightmost path produces the map~$(\op{PD}_F^{\Q / \Z})^{-1} \circ i^* \circ \beta^{-1}$.

The top square commutes by Lemma~\ref{lem:munkres}, to be precise, it commutes since in our case we have $(-1)^2=1$.
The third square from the top commutes  by Proposition~\ref{prop:bredon}.
(Note that we had to sneak in a minus sign in front of the $[\partial Z]$ to cancel the minus sign we would otherwise pick up from Proposition~\ref{prop:bredon}.)
 The bottom square
commutes since we had observed in the beginning of the proof that $[\partial Z]=-[F]$. Finally  the second square (or first and only pentagon, depending on your point of view), commutes by applying the first statement of Lemma~\ref{lem:PropertiesCap}. More precisely, 
applying  the first statement of Lemma~\ref{lem:PropertiesCap}  to the two maps 
 $p\colon  (M,\emptyset) \to (M,X)$ and  $q \colon (Z,F) \to (M,X)$
and using that  $p_*([M])=q_*([Z])$ we obtain that
for every $\varphi$ in $H^1(M;\Q / \Z)$, we have the following equality in $H_2(M,X;\Q/\Z)$:
$$ p_*(\varphi \acap [M])\,=\,p_*(p^*(\varphi) \acap [M])\,=\,\varphi \acap p_*([M])\,=\,\varphi \acap q_*([Z])\,=\,q_*(q^*(\varphi) \acap [Z])\,=\,q_*(m^*(\varphi) \acap [Z]).$$
Here the first equality  can easily give rise to confusion. The point is that $p\colon (M,\emptyset)\to (M,X)$ is a map of pairs of topological spaces which is the identity on the first entry. In Lemma~\ref{lem:PropertiesCap} (1) we could have distinguished in our notation between the maps of pairs of topological spaces and the maps on the two individual spaces but we declined to do so to keep the notation short. The same applies to the last equality, since the map $q\colon (Z,F)\to (M,X)$ of pairs of  topological spaces, when restricted to the first entry  is precisely the map $m$.
\end{proof}

In the remainder of this paper we use the following notation:
\bnm
\item We denote by  $i$ the inclusion  map $F\to M$.
\item We denote by $\rho \colon \Q \to \Q / \Z$  the canonical projection.
\item We denote by $\Phi_{\Z}\colon \Z^g\to H_1(F;\Z)$ the map that is given by $\Phi_{\Z}(e_r)={a}_r$, similarly we define $\Phi_{\Q}\colon \Q^g\to H_1(F;\Q)$ and  $\Phi_{\Q/\Z}\colon (\Q/\Z)^g\to H_1(F;\Q/\Z)$. We will use on several occasions  that for $\Z^g\subset \Q^g$ the maps $\Phi_{\Z}$ and $\Phi_{\Q}$ agree.
\item If in (3) we replace the $a_r$ by $b_r$ we obtain  maps that we denote by $\Psi_\Z$, $\Psi_{\Q}$ and $\Psi_{\Q/\Z}$.
\enm
The next proposition deals with the computation of $\theta \circ i_*\colon H_1(F;\Z)\to H_1(F;\Q/\Z)$ on the span of $a_1,\dots,a_g\in H_1(F;\Z)$. 

\begin{proposition}
\label{prop:ComputeTheta}
For any $v\in \Z^g$ the following equality holds:
\[(\theta\circ i_*)(\Phi_{\Z}(v))\,\,=\,\,-\Psi_{\Q/\Z}(B^{-1}Av)\,\,\in\,\, H_1(F;\Q/\Z).\]
\end{proposition}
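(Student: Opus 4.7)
The plan is to reduce, by $\Z$-linearity of $\theta$ and $i_*$, to proving the identity for the basis vector $v=e_r$, and then to construct an explicit rational $2$-chain in $M$ that bounds $a_r$ and trace its class through the four maps defining $\theta$ in~\eqref{eq:Theta}.

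The heart of the argument is the construction of this rational null-cobordism. By Proposition~\ref{prop:Presentation}, the matrix $B$ is invertible over $\Q$, so we may set $w:=-B^{-1}Ae_r\in \Q^g$. With respect to our symplectic basis, the inclusion-induced maps $j_*\colon H_1(F;\Q)\to H_1(X;\Q)$ and $k_*\colon H_1(F;\Q)\to H_1(Z;\Q)$ are represented by the matrices $(I\ 0)$ and $(A\ B)$ respectively (the latter because $k_*=\iota_*\circ\varphi_*$ and $\iota_*$ annihilates the $b$-classes in $H_1(Z;\Z)$). A direct check then gives
\[ k_*\!\Big(a_r+\sum_s w_s b_s\Big)\,=\,Ae_r+Bw\,=\,0\in H_1(Z;\Q)\quad\text{and}\quad j_*\!\Big(\!-\!\sum_s w_s b_s\Big)\,=\,0\in H_1(X;\Q). \]
Since $H_2(X;\Q)=H_2(Z;\Q)=0$, the long exact sequences of the pairs $(X,F)$ and $(Z,F)$ yield rational $2$-chains $\hat\alpha^X\in C_2(X;\Q)$ and $\hat\alpha^Z\in C_2(Z;\Q)$ with boundaries $-\sum_s w_s b_s$ and $a_r+\sum_s w_s b_s$ respectively. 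Their sum $\hat\alpha:=\hat\alpha^X+\hat\alpha^Z\in C_2(M;\Q)$ then satisfies $\partial\hat\alpha=a_r$, an integral $1$-cycle representing $i_*(a_r)\in H_1(M;\Z)$.

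The rest is a short chain-level diagram chase. By the chain-level definition of the Bockstein, $\rho_*(\hat\alpha)$ is a $\Q/\Z$-cycle representing $\beta^{-1}(i_*(a_r))\in H_2(M;\Q/\Z)$. The map $p_*$ kills $\rho_*(\hat\alpha^X)$, so $p_*[\rho_*(\hat\alpha)]=[\rho_*(\hat\alpha^Z)]\in H_2(M,X;\Q/\Z)$. The inverse excision isomorphism identifies this with $[\rho_*(\hat\alpha^Z)]\in H_2(Z,F;\Q/\Z)$, and the connecting homomorphism of the pair $(Z,F)$ sends it to $[\rho_*(\partial\hat\alpha^Z)]=[\rho_*(\sum_s w_s b_s)]=\Psi_{\Q/\Z}(w)=-\Psi_{\Q/\Z}(B^{-1}Ae_r)$, where we used that the integral $1$-chain $a_r$ reduces to zero modulo $\Z$. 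The general case $v\in\Z^g$ then follows by linearity.

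The main obstacle is the construction of the rational null-cobordism $\hat\alpha$; this is precisely where the invertibility of $B$ over $\Q$ is used essentially. The one mild subtlety, already resolved by the conventions of the section, is that the matrix representing $k_*$ involves both $\varphi_*$ and $\iota_*$, and one must be careful not to confuse $F=\partial X$ with the copy $\partial Z$ glued to it via $\varphi$.
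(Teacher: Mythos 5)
Your proposal is correct and follows essentially the same route as the paper's proof: you build rational $2$-chains in $X$ and $Z$ whose boundaries are $\wti{\Psi}_\Q(B^{-1}Av)$ and $\wti{\Phi}_\Z(v)-\wti{\Psi}_\Q(B^{-1}Av)$ respectively, observe that their sum is a Bockstein preimage of $i_*(\Phi_\Z(v))$, and chase it through $p_*$, excision and the connecting homomorphism, using that integral chains die under $\rho_*$. The only differences are cosmetic (reduction to basis vectors, the sign built into your $w$, and the unnecessary appeal to $H_2(X;\Q)=H_2(Z;\Q)=0$ where null-homology of the $1$-cycle already suffices).
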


\begin{proof}
In this proof we will mostly drop all inclusion maps from the notation, especially if we work on the chain level. We denote by $\wti{a}_1,\dots,\wti{a}_g$, $\wti{b}_1,\dots,\wti{b}_g$  singular chains in $F$ that represent $a_1,\dots,a_g,$ $b_1,\dots,b_g$.  Let $v=(v_1,\dots,v_g)\in \Z^g$. 
We denote by $\wti{\Phi}_{\Z}\colon \Z^g\to C_1(F;\Z)$ the map that is given by $\wti{\Phi}_\Z(e_r)=\wti{a}_r$ and we denote by  $\wti{\Psi}_{\Z}\colon \Z^g\to C_1(F;\Z)$ the map that is given by $\wti{\Psi}_\Z(e_r)=\wti{b}_r$. We make the obvious adjustments in the notation when we use other coefficients.

\begin{claim}\mbox{}
\bnm 
\item There exists $x\in C_2(X;\Q)$ with $\partial_\Q(x)=\wti{\Psi}_{\Q}(B^{-1}Av) \in C_1(F;\Q)\subset C_1(X;\Q)$. 
\item There exists $z\in C_2(Z;\Q)$ with $\partial_\Q(z)=\wti{\Phi}_\Z(v)-\wti{\Psi}_{\Q}(B^{-1}Av) \in C_1(F;\Q)\subset C_1(Z;\Q)$. 
\enm
\end{claim}

Note that  $\wti{\Psi}_\Q(B^{-1}Av)$ is a rational linear combination of $\wti{b}_1,\dots,\wti{b}_g$.
Since each $\wti{b}_r$ is null-homologous in $X$ we see that   $\wti{\Psi}_\Q(B^{-1}Av)$ is null-homologous in $C_*(X;\Q)$. This shows that there exists a singular 2-chain $x\in C_2(X;\Q)$ with $\partial_\Q (x)=\wti{\Psi}_\Q(B^{-1}Av)$. 

We make the usual identification $H_1(Z;\Z)=\Z^g$ and $H_1(Z;\Q)=\Q^g$ coming from the fact that $Z$ is a copy of $X$. Under this identification the map $k_*\circ \Phi_{\Q}\colon \Q^g\to H_1(Z;\Q)=\Q^g$ is by definition given by the matrix $A$ and the map $k_*\circ \Psi_{\Q}\colon \Q^g\to H_1(Z;\Q)=\Q^g$ is by definition given by the matrix $B$. Putting these two observations together we see that in $H_1(Z;\Q)=\Q^g$ we have the equality:
\[ k_*\big(\Phi_{\Q}(v)-\Psi_{\Q}(B^{-1}Av)\big)\,\,=\,\, (k_*\circ \Phi_\Q)(v)-
(k_*\circ \Psi_\Q)(B^{-1}Av)\,\,=\,\, Av-BB^{-1}Av\,\,=\,\,0.\]
Put differently, the singular 1-chain $\wti{\Phi}_{\Q}(v)-\wti{\Psi}_{\Q}(B^{-1}Av)=\wti{\Phi}_{\Z}(v)-\wti{\Psi}_{\Q}(B^{-1}Av)$ is null-homologous in $C_*(Z;\Q)$, i.e.\  there exists a singular 2-chain $z\in C_2(Z;\Q)$ with $\partial_\Q(z)=\wti{\Psi}_{\Z}(v)-\wti{\Psi}_\Q(B^{-1}Av)$.
This concludes the proof of the claim.

From the definition of the Bockstein homomorphism $\beta\colon H_2(M;\Q/\Z)\to H_1(M;\Z)$  as a connecting homomorphism and the above properties of $x$ and $z$, it follows immediately that
\[  \beta([\rho_*(z+x)])\,=\, i_*(\Phi_{\Z}(v))\,\in\, H_1(M;\Q/\Z).\] 

To conclude the proof of the lemma, recall that the map $\theta$ is defined as the composition 
\[ H_1(M;\Z)\, \xrightarrow{\beta^{-1}}\, H_2(M;\Q/\Z)\, \xrightarrow{\,p_*\,} \,H_2(M,X;\Q / \Z) \,\xleftarrow{\cong}\, H_2(Z,F; \Q / \Z)\, \xrightarrow{\,\partial_{\Q/\Z}\,} \,H_1(F; \Q / \Z).\]
Using the definition of the relative homology group $H_2(M,X; \Q / \Z)$ and the previous computation, it follows that 
\[ (p_* \circ \beta^{-1} \circ i_*(\Phi_{\Z}(v)))\,\,=\,\,p_*([\rho_*(z+x)])\,\,=\,\,\rho_*([z]).\] Since $z$ is already a singular chain in $C_2(Z,F;\Q/\Z)$ it suffices to prove the following claim.

\begin{claim}
We have $\partial_{\Q/\Z}(\rho_*([z]))=-\Psi_{\Q/\Z}(B^{-1}Av)\in H_1(F;\Q/\Z)$.
\end{claim}

By the choice of $z$ we have 
$\partial_\Q(z)=\wti{\Phi}_\Z(v)-\wti{\Psi}_{\Q}(B^{-1}Av) \in C_1(F;\Q)$. 
This implies that 
$\partial_{\Q/\Z}(\rho_*([z]))=[\rho_*(\wti{\Phi}_\Z(v))-\rho_*(\wti{\Psi}_{\Q}(B^{-1}Av))] \in H_1(F;\Q/\Z)$.
But $\wti{\Phi}_{\Z}(v)$ is an integral class, so we have 
$\partial_{\Q/\Z}(\rho_*([z]))=-\Psi_{\Q/\Z}(B^{-1}Av)\in H_1(F;\Q/\Z)$. This concludes the proof of the proposition.
\end{proof}

We can now provide  the proof of Theorem~\ref{thm:Main}. In fact we will prove the following slightly more precise statement:

\begin{theorem}
\label{thm:Main2}
Let $g\in \N$ and let $\varphi\colon F_g\to F_g$ be an orientation-preserving diffeomorphism.  Suppose that $M(\varphi)$ is a rational homology sphere.
Then the following statements hold:
\bnm[font=\normalfont]
\item The above
homomorphism $i_*\circ \Phi_{\Z}\colon \Z^g\to H_1(M(\varphi);\Z)$  descends to an isomorphism
\[ \hspace{1cm} i_*\circ \Phi\colon \Z^g/B_\varphi^T \Z^g\,\,\xrightarrow{\,\cong\,}\,\,H_1(M(\varphi);\Z),\]
in particular the matrix $B_\varphi\in M(g\times g,\Z)$ has non-zero determinant.
\item The isomorphism $\Phi$ from $(1)$ defines an isometry from  the  form
\begin{align*}
\hspace{1cm}
\Z^g / B_\varphi^T \Z^g \times \Z^g / B_\varphi^T \Z^g &\to\,\, \Q / \Z \\
(v,w) &\mapsto \,\,-v^T  B_\varphi^{-1}A_\varphi w
\end{align*}
to the linking form of $M(\varphi)$.
\enm
\end{theorem}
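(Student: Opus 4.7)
The plan is to combine three ingredients already established in this section: the presentation of $H_1(M;\Z)$ from Proposition~\ref{prop:Presentation}, the reduction formula~(\ref{eq:Reduction}) that expresses $\lambda_M$ in terms of a Kronecker pairing on the Heegaard surface $F$, and the explicit computation of $\theta\circ i_*$ on the $a$-classes given in Proposition~\ref{prop:ComputeTheta}.

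Part~(1) follows essentially for free. Since $M(\varphi)$ is a rational homology sphere, $H_1(M(\varphi);\Z)$ is finite. Proposition~\ref{prop:Presentation}(1) identifies this group as the cokernel of $B_\varphi^T$ acting on $\Z^g$ via the map sending $e_r$ to $i_*(a_r) = (i_*\circ\Phi_\Z)(e_r)$. Finiteness forces $\det B_\varphi\ne 0$, and the homomorphism descends to the claimed isomorphism $i_*\circ\Phi\colon \Z^g/B_\varphi^T\Z^g \xrightarrow{\cong} H_1(M(\varphi);\Z)$.

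For part~(2), I fix $v,w\in \Z^g$ and compute $\lambda_M(i_*\Phi_\Z(v),i_*\Phi_\Z(w))$. Applying~(\ref{eq:Reduction}) and then Proposition~\ref{prop:ComputeTheta} reduces this to
\[ -\big\langle \op{PD}_F^{\Q/\Z}(\Psi_{\Q/\Z}(B_\varphi^{-1}A_\varphi v)),\, \Phi_\Z(w)\big\rangle_F \in \Q/\Z. \]
To evaluate this pairing, I lift $B_\varphi^{-1}A_\varphi v$ to $\Q^g$ and invoke naturality of both Poincar\'e duality and the Kronecker pairing under the coefficient change $\rho\colon \Q\to \Q/\Z$, which rewrites the pairing as $\rho$ applied to the analogous $\Q$-valued pairing. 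By~(\ref{eq:intersection-form-surface}) this rational pairing equals $-Q_F(\Psi_\Q(B_\varphi^{-1}A_\varphi v), \Phi_\Q(w))$, and the symplectic-basis relations $b_i\cdot a_j = -\delta_{ij}$ evaluate $Q_F(\Psi_\Q(u),\Phi_\Q(w)) = -u^T w$. Assembling the signs yields
\[ \lambda_M(i_*\Phi_\Z(v), i_*\Phi_\Z(w)) \,=\, -(B_\varphi^{-1}A_\varphi v)^T w \,=\, -v^T A_\varphi^T (B_\varphi^T)^{-1} w \pmod \Z. \]

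The remaining step is algebraic. Lemma~\ref{lem:MatrixProperties} provides the identity $A_\varphi B_\varphi^T = B_\varphi A_\varphi^T$, equivalently $A_\varphi^T(B_\varphi^T)^{-1} = B_\varphi^{-1}A_\varphi$, which rewrites the formula as $-v^T B_\varphi^{-1}A_\varphi w$. This same identity also shows that the form $(v,w)\mapsto -v^T B_\varphi^{-1}A_\varphi w$ takes integer values whenever $v$ or $w$ lies in $B_\varphi^T\Z^g$, so it descends to $\Z^g/B_\varphi^T\Z^g$ in both variables; combined with~(1) this promotes $i_*\circ \Phi$ to an isometry. The main obstacle is not conceptual but bookkeeping: the computation passes through several coefficient systems $\Z\hookrightarrow\Q\twoheadrightarrow\Q/\Z$, each of the three sign sources (the minus sign from Proposition~\ref{prop:ComputeTheta}, the one from~(\ref{eq:intersection-form-surface}), and the one from $b_i\cdot a_j=-\delta_{ij}$) must combine correctly, and the matrix identity from Lemma~\ref{lem:MatrixProperties} must be applied in the right direction to yield a formula that is manifestly well defined modulo $B_\varphi^T\Z^g$.
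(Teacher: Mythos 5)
Your argument reproduces the paper's proof step for step: part (1) is Proposition~\ref{prop:Presentation}, and part (2) is exactly the chain consisting of~(\ref{eq:Reduction}), Proposition~\ref{prop:ComputeTheta}, naturality under the coefficient map $\rho\colon\Q\to\Q/\Z$, the identity~(\ref{eq:intersection-form-surface}), the evaluation of $Q_F$ on the symplectic basis, and Lemma~\ref{lem:MatrixProperties}. The one point of divergence is the basis evaluation: you compute $Q_F(\Psi_\Q(u),\Phi_\Q(w))=-u^Tw$ directly from $b_i\cdot a_j=-\delta_{ij}$, whereas the paper's display~(\ref{eq:proof4}) inserts an extra minus sign in front of the matrix product and arrives at $+u^Tw$, hence at $\lambda_M(i_*\Phi_\Z(v),i_*\Phi_\Z(w))=+v^TB_\varphi^{-1}A_\varphi w$ --- consistent with the ``it is enough to show'' line that opens the paper's proof, but opposite in sign to the form asserted in the theorem itself. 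Your version is the one that is internally consistent with the stated convention that $Q_F$ is represented by $\left(\begin{smallmatrix}0&I_g\\-I_g&0\end{smallmatrix}\right)$, and it lands on the asserted $-v^TB_\varphi^{-1}A_\varphi w$; given the authors' own caveat that their sign should be taken with a grain of salt, this is a discrepancy in the paper rather than a gap in your argument. Everything else --- the well-definedness check via $AB_\varphi^T=B_\varphi A_\varphi^T$, the direction in which that identity is applied, and the finiteness argument for $\det B_\varphi\neq 0$ --- matches the paper and is correct.
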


\begin{remark}
As a reality check it is worth verifying that the form given in Theorem~\ref{thm:Main2} (2) is actually well-defined. It is clear that the form does not depend on the choice of the  representative  $w$. Furthermore, 
by Lemma~\ref{lem:MatrixProperties} we have $AB^T=BA^T$ which implies that the form does not depend on the choice of the representative $v$.
\end{remark}

\begin{proof}
Note that statement (1) has already been proved in  Proposition~\ref{prop:Presentation}.
Therefore it is enough to show that for all $v,w\in \Z^g$ we have
\[ \lambda_M(i_*(\Phi_\Z(v)),i_*(\Phi_\Z(w)))\,\,=\,\,v^T(B^{-1}A)w\,\in\, \Q/\Z.\]
Combining~(\ref{eq:Reduction}) with Proposition~\ref{prop:ComputeTheta}, we obtain the equality
\begin{equation}\label{eq:proof1}
\ba{rl}
\lambda_M(i_*(\Phi_\Z(v)),i_*(\Phi_\Z(w)))
&=\,\,\big\langle (\op{PD}_F^{\Q / \Z} \circ \theta \circ i_* )(\Phi_\Z(v)), \Phi_\Z(w) \big\rangle_F \\
&=\,\,-\big\langle \op{PD}_F^{\Q / \Z} (\Psi_{\Q/\Z}(B^{-1}Av)),\Phi_\Z(w)\big\rangle_F.\ea
\end{equation}
The commutativity of the diagram 
$$ \xymatrix@C1.3cm@R0.5cm{
H_1(F;\Q) \ar[r]^-{\op{PD}_F^\Q} \ar[d]^(0.45){\rho_*}& H^1(F;\Q) \ar[r]^-{\op{ev}} \ar[d]^(0.45){\rho_*}& \op{Hom}(H_1(F;\Q),\Q)\ar[d]^(0.45){\rho_*} \\
H_1(F;\Q / \Z) \ar[r]^-{\op{PD}_F^{\Q / \Z}} & H^1(F;\Q / \Z)  \ar[r]^-{\op{ev}} & \op{Hom}(H_1(F;\Q),\Q / \Z) }$$
now implies that
\begin{equation}\label{eq:proof2}
\big\langle \op{PD}_F^{\Q / \Z} (\Psi_{\Q/\Z}(B^{-1}Av)),\Phi_\Z(w)\big\rangle_F\,\,=\,\, \rho_* \big(\big\langle  \op{PD}_F^\Q (\Psi_\Q(B^{-1}Av)),\Phi_\Q(w) \big\rangle_F\big).
\end{equation}
By the calculation of the intersection form of the surface $F$ given in (\ref{eq:intersection-form-surface}) we have
\begin{equation}\label{eq:proof3}
\rho_* \big(\big\langle  \op{PD}_F^\Q (\Psi_\Q(B^{-1}Av)),\Phi_\Q(w) \big\rangle_F\big)\,\,=\,\,-Q_F\big(\Psi_\Q(B^{-1}Av),\Phi_\Q(w)\big).\end{equation}
Finally we recall that the $a_r$ and $b_r$ form a symplectic basis for $H_1(F;\Z)$, i.e.\ with respect to this basis the intersection form $Q_F$ is represented by the matrix $\left(\begin{smallmatrix}0&I_g\\ -I_g&0\end{smallmatrix}\right)$.  
In our context, together with the equality $AB^T=BA^T$ from Lemma~\ref{lem:MatrixProperties} this implies that
\begin{equation}\label{eq:proof4}
 Q_F\big(\Psi_\Q(B^{-1}Av),\Phi_\Q(w)\big)=-
\bpp 0 \\  B^{-1}Av\epp^T\hspace{-0.1cm} \bpp 0&I_g\\ -I_g&0\epp \hspace{-0.1cm}\bpp w \\ 0\epp=
 v^TA^T(B^{-1})^Tw= v^TB^{-1}Aw.\end{equation}
The desired statement now follows from the combination of 
(\ref{eq:proof1}), (\ref{eq:proof2}), (\ref{eq:proof3}) and (\ref{eq:proof4}).
\end{proof}

\bibliography{BiblioLinking}
\bibliographystyle{plainnat}

\end{document}